\documentclass[letterpaper,11pt,reqno]{amsart}

\usepackage[margin=1in]{geometry}

\usepackage{hyperref}
\hypersetup{
     colorlinks   = true,
     citecolor    = black,
     linkcolor    = blue,
     urlcolor     = black
}

\usepackage{graphicx,amsmath,amssymb,amsthm,paralist,color,tikz-cd}
\usepackage{mathrsfs}
\usepackage{mathtools}
\usepackage{setspace}

\usepackage[utf8]{inputenc} 
\usepackage[T1]{fontenc}

\usepackage{amscd}

\setcounter{tocdepth}{1}

\usepackage{bbm}

\newtheorem{thm}{Theorem}[section]
\newtheorem{lem}[thm]{Lemma}

\newtheorem{prop}[thm]{Proposition}

\newtheorem{cor}[thm]{Corollary}

\theoremstyle{definition}
\newtheorem{definition}[thm]{Definition}
\newtheorem*{definition-nono}{Definition}

\newtheorem{remark}[thm]{Remark}

\newtheorem*{acknowledgement}{Acknowledgements}

\newtheoremstyle{case}{}{}{}{}{}{:}{ }{}
\theoremstyle{case}

\newcommand{\N}{\mathbb{N}}

\newcommand{\R}{\mathbb{R}}

\renewcommand{\H}{\mathbb{H}}


\newcommand{\mc}{\mathcal}

\newcommand{\mf}{\mathfrak}

\newcommand{\mrm}{\mathrm}

\renewcommand{\a}{\alpha}
\renewcommand{\b}{\beta}
\newcommand{\g}{\gamma}
\newcommand{\G}{\Gamma}
\renewcommand{\d}{\delta}
\newcommand{\e}{\varepsilon}
\renewcommand{\l}{\lambda}
\renewcommand{\L}{\Lambda}

\newcommand{\vp}{\varphi}
\renewcommand{\t}{\tau}

\renewcommand{\k}{\kappa}


\newcommand{\set}[1]{\left\{#1\right\}}


\def\multiset#1#2{\ensuremath{\left(\kern-.3em\left(\genfrac{}{}{0pt}{}{#1}{#2}\right)\kern-.3em\right)}}
\newcommand{\norm}[1]{\left\lVert#1\right\rVert}

\newcommand{\Acal}{\mc{A}}

\newcommand{\Pcal}{\mc{P}}

\newcommand{\Wcal}{\mc{W}}

\numberwithin{equation}{section}

\newcommand{\ps}{\mu^{\mrm{PS}}}

\newcommand{\uT}{\mrm{T}^1}

\newcommand{\Ad}{\mrm{Ad}}

\newcommand{\dist}{\mrm{dist}}

\newcommand{\id}{\mrm{Id}}

\newcommand{\murhoj}{\mu^u_{y^j_\rho}}

\newcommand{\yrho}{y_{\rho}}

\newcommand{\supp}{\mrm{supp}}


\usepackage{amsfonts}

\title[Flattening and Fourier Decay]{Polynomial Fourier Decay For Patterson-Sullivan Measures}
\author{Osama Khalil}
\address{Department of Mathematics, Statistics, and Computer Science, University of Illinois Chicago, Chicago, IL}
\email{okhalil@uic.edu}

\date{}

\begin{document}

\begin{abstract}
    We show that the Fourier transform of Patterson-Sullivan measures associated to convex cocompact groups of isometries of real hyperbolic space decays polynomially quickly at infinity. 
    The proof is based on the $L^2$-flattening theorem obtained in~\cite{Khalil-Mixing} combined with a method based on dynamical self-similarity for ruling out the sparse set of potential frequencies where the Fourier transform can be large. 
\end{abstract}

\maketitle

\section{Introduction}

\subsection{Background}

The Fourier transform of a Borel probability measure $\mu$ on $\R^d$ is defined as follows:
\begin{align}
    \hat{\mu}(\xi) := \int_{\R^d} e^{2\pi i\langle \xi, x\rangle} \;d\mu(x), \qquad \xi\in\R^d.
\end{align}
We say $\mu$ has \textit{polynomial Fourier decay} if $|\hat{\mu}(\xi)| = O(\norm{\xi}^{-\k})$ for some $\k>0$ as $\norm{\xi}\to \infty$.

Rates of decay of Fourier transforms of dynamically defined measures have been extensively studied in recent years.
Beyond its intrinsic interest, this question has found many applications in other areas of mathematics; e.g.~essential spectral gaps on hyperbolic manifolds~\cite{DyatlovZahl,BourgainDyatlov,LiNaudPan}, the uniqueness problem~\cite{LiSahlsten}, quantum chaos and fractal uncertainty principles~\cite{Dyatlov-IntroFUP}, Diophantine approximation~\cite{DavenportErdosLeVeque}, and geometric measure theory~\cite{Shmerkin-AbsContBernoulliConv,Mattila-FourierBook} to name a few.

Moreover, the problem has motivated the development of many methods drawing on a wide varying tools ranging from spectral gaps of the underlying dynamics~\cite{AlgomRHWang-Polynomial,BakerSahlsten}, to renewal theory~\cite{Li-Stationary}, sum-product phenomena~\cite{BourgainDyatlov,LiNaudPan,Leclerc-JuliaSets}, large deviation estimates for Fourier transforms~\cite{MosqueraShmerkin,AlgomChangWuWu,BakerBanaji}, as well as many related developments; cf.~\cite{AlgomHertzWang-Normality,AlgomHertzWang-Log,Sahlsten-Gauss,SahlstenStevens,LiSahlsten,LiSahlsten-SelfAffine,VarjuYu,Bremont} for a non-exhaustive list.
We refer the reader to the survey~\cite{Sahlsten-survey} for a comprehensive account of the history and recent developments in the subject.

A common strategy that is implicit in many of the aforementioned results proceeds as follows:
\begin{enumerate}
    \item\label{item:flattening step} Find a mechanism to show that the Fourier transform has the desired rate of decay for a large set of frequencies $\xi$.
    
    \item\label{item:Step 2} Use the dynamics (or the multiscale/convolution structure of $\mu$) to express the Fourier transform of $\mu$ at frequency $\xi$ as an \textit{average} of Fourier transforms of (scaled copies of) $\mu$ at images of $\xi$ by the dynamics.

    \item\label{item:Step 3} Show (through non-linearity of/Diophantine conditions on the dynamics) that images of $\xi$ by the dynamics are reasonably well-distributed in the space in such a way that they avoid the potential exceptional set of frequencies arising in Step~\ref{item:flattening step}.
\end{enumerate}

To demonstrate this strategy, consider the following basic estimate towards Step~\ref{item:flattening step}: if $\mu$ satisfies the Frostman condition $\mu(B(x,r)) \lesssim r^\a $ for some $\a>0$ and all balls of radius $r\geq 0$, then the Fourier transform decays like $\norm{\xi}^{-\a/2}$ \emph{on average}, i.e.
\begin{align}\label{eq:Frostman}
    \int_{\norm{\xi}\leq R} |\hat{\mu}(\xi)|^2\;d\xi \lesssim R^{d-\a}, \qquad \forall R\geq 1.
\end{align}

This estimate roughly means that the exceptional set of potentially problematic frequencies have (box) dimension at most $d-\a$; cf.~\cite[Section 3.8]{Mattila-FourierBook}.
Hence, we can obtain Fourier decay as soon as we can show that the frequencies produced in Step~\ref{item:Step 2} have dimension $>d-\a$.
Since the image of $\xi$ under the dynamics tends to have a similar dimension to the support of $\mu$ itself, this procedure is sufficient for establishing Fourier decay in many situations when the dimension of the support of $\mu$ is $>d/2$.

However, in general, the estimate~\eqref{eq:Frostman} is rather weak when $\a\leq d/2$. 
In that case, more involved methods are necessary to either produce stronger estimates in Step~\ref{item:flattening step} (e.g.~large deviations methods) or to produce better averaging and well-distribution schemes in Steps~\ref{item:Step 2} and~\ref{item:Step 3} (e.g.~spectral gap and renewal theory methods).

Recently, a very general estimate towards Step~\ref{item:flattening step} was obtained in~\cite[Corollary 11.5]{Khalil-Mixing} under natural non-concentration hypotheses on the (not necessarily dynamically defined) measure $\mu$. 
Namely, it is shown\footnote{Cf.~Definition~\ref{def:non-conc} and Theorem~\ref{thm:flattening} for precise statements.} that if $\mu$ does not concentrate near proper affine subsapces of $\R^d$ at many scales, then its Fourier transform decays polynomially outside of a very sparse set of frequencies, i.e.~for all $\e>0$, there is $\d>0$ such that:
\begin{align}\label{eq:flattening intro}
    \left|\set{\norm{\xi}\leq R: |\hat{\mu}(\xi)|>R^{-\d}}\right| = O\left(R^{\e}\right).
\end{align}

The goal of this article to show that~\eqref{eq:flattening intro} can be used in conjunction with the strategy outlined above to give efficient proofs of quantitative Fourier decay of dynamically defined measures.
We apply our method to a particular class of interest in applications, namely that of Patterson-Sullivan measures for convex cocompact groups of isometries of real hyperbolic space.
Since the non-concentration conditions implying~\eqref{eq:flattening intro} are known to hold for large classes of dynamically-defined measures\footnote{E.g.~self-conformal measures~\cite{Dasetal}, and Patterson-Sullivan measures for (cusped) geometrically finite manifolds~\cite{Khalil-Mixing}.}, we hope the simplicity of the method presented here will allow it to be extended to yield Fourier decay results in much broader contexts.

\subsection{Main result}

Let $\G$ be a discrete, Zariski-dense, convex cocompact, group of isometries of real hyperbolic space $\H^{d+1}$, $d\geq 1$.
Let $\L_\G$ be the limit set of $\G$ on $\partial \H^{d+1}$ and $\mu$ be the Patterson-Sullivan probability measure on $\L_\G$ associated to $\G$; cf.~Section~\ref{sec:prelims} for detailed definitions.
The following is the main result of this article.

\begin{thm}\label{thm:decay intro}
There exists $\k>0$ such that the following holds for all $\vp\in C^2$, $\psi\in C^1$ satisfying
\begin{align*}
    \norm{\vp}_{C^2}+ \norm{\psi}_{C^1} \leq A,  \qquad \inf_{x\in \L_\G} \norm{\nabla_x\vp}> a,
\end{align*}
for some constants $a>0$ and $A\geq 1$. 
There exists a constant $C=C(A,a,\mu)\geq 1$, so that for all $\l\neq 0$, we have
\begin{align*}
    \left|\int_{\L_\G} e^{2\pi i\l \vp(x)} \psi(x) \;d\mu(x)\right| \leq C |\l|^{-\k}.
\end{align*}
\end{thm}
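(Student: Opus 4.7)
The plan is to realize the three-step strategy from the introduction, with the flattening estimate~\eqref{eq:flattening intro} of~\cite[Corollary 11.5]{Khalil-Mixing} providing Step~\ref{item:flattening step} and the conformal self-similarity of the Patterson--Sullivan measure driving Steps~\ref{item:Step 2} and~\ref{item:Step 3}. For the self-similar decomposition, I would invoke a Markov/Bowen--Series coding of the expanding boundary action on $\L_\G$, producing at any scale $n$ a finite family of conformal contracting inverse branches $\{\g\in\G_n\}$ with $\norm{D\g}\asymp e^{-n}$, uniform bounded distortion, and PS weights $c_\g\asymp\norm{D\g}^{\d_\G}$ forming a partition of unity. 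Pulling each piece $\g(\Om_0)$ back to a fixed Markov cylinder $\Om_0$ via $\g^{-1}$ gives, by quasi-invariance of $\mu$,
\begin{align*}
    \int_{\L_\G} e^{2\pi i\l\vp}\psi\,d\mu = \sum_{\g\in\G_n} c_\g \int_{\Om_0} e^{2\pi i\l\,\vp\circ\g}\,(\psi\circ\g)\,d\nu_\g,
\end{align*}
where each $\nu_\g$ is a bounded-distortion perturbation of a fixed renormalization $\nu$ of $\mu|_{\Om_0}$.

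Taylor-expanding $\vp\circ\g$ around a fixed base point $y_0\in\Om_0$ yields an affine phase of slope $\eta_\g := \l\,D\g^T_{y_0}\nabla\vp(\g y_0)$, and conformality of $D\g$ combined with $\inf\norm{\nabla\vp}>a$ gives $|\eta_\g|\asymp_a|\l|\norm{D\g}\asymp |\l|e^{-n}=:R$. The Taylor remainder is $O(|\l|(\norm{D\g}^2+\norm{D^2\g}))$, and the standard M\"obius bound $\norm{D^2\g}\lesssim\norm{D\g}^{3/2}$ makes the dominant contribution $O(|\l|e^{-3n/2})$; this is negligible once $n$ exceeds $\tfrac{2}{3}\log|\l|$, so choose $n$ such that $R=|\l|^{1/3-\d_0}$ for a small $\d_0>0$. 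The theorem is thereby reduced to
\begin{align*}
    \sum_{\g\in\G_n} c_\g\,|\hat{\nu}_\g(\eta_\g)| \leq C|\l|^{-\k}.
\end{align*}
Applying~\eqref{eq:flattening intro} to the renormalizations $\nu_\g$ (which inherit the non-concentration hypotheses of~\cite[Corollary 11.5]{Khalil-Mixing} from $\mu$ itself) at scale $R$ yields an exceptional set $\Ecal_R\subset B(0,CR)$ of Lebesgue measure $\leq R^\e$ outside of which $|\hat{\nu}_\g(\eta_\g)|\leq R^{-\d}$; the ``good'' branches thereby contribute at most $R^{-\d}\asymp|\l|^{-\d/3}$.

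The crux of the proof, and where I expect the main difficulty, is to bound the PS weight of \emph{bad} branches:
\begin{align*}
    \sum_{\g\,:\,\eta_\g\in\Ecal_R} c_\g = \mu\{x\in\L_\G : \eta(x)\in\Ecal_R\} \leq R^{-\d'}
\end{align*}
for some $\d'>0$, where $\eta:\L_\G\to\R^d$ is the piecewise-constant effective-frequency map $\eta(x):=\eta_{\g(x)}$ defined via the Markov branch containing $x$. This is a Frostman-type mass bound for the pushforward $\eta_*\mu$ against a set of small Lebesgue measure. The delicate point is that conformality forces each $\eta_\g$ onto the sphere of radius $\asymp R$, so $\eta_*\mu$ is supported on a $(d-1)$-dimensional subvariety and one must quantitatively rule out pathological concentration on it. Zariski density of $\G$ is essential here, since it prevents the rotations $D\g/\norm{D\g}$ from stabilizing any proper affine subspace and hence ensures sufficient scattering of $\nabla\vp(\g y_0)$ as $\g$ varies. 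Making this quantitative---for instance by verifying that $\eta$ itself satisfies the affine non-concentration hypotheses of~\cite[Corollary 11.5]{Khalil-Mixing} at a suitably chosen auxiliary scale, so that a second bootstrapped application of the flattening mechanism converts the Lebesgue bound on $\Ecal_R$ into a mass bound on $\eta_*\mu(\Ecal_R)$---appears to be the central technical step.
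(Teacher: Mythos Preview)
Your outline correctly identifies the architecture and accurately locates the central difficulty, but the step you flag as ``the crux'' is genuinely unresolved, and the mechanism you sketch for it does not work. The flattening theorem bounds the Lebesgue measure of frequencies at which $\hat\nu$ is large for a \emph{fixed} measure $\nu$; it does not convert a Lebesgue bound on an arbitrary set $\mathcal{E}_R$ into a bound on $\eta_*\mu(\mathcal{E}_R)$. Your proposed fix---verifying that $\eta$ satisfies affine non-concentration and bootstrapping---cannot work as stated: the map $\eta$ is piecewise constant on Markov cylinders, so $\eta_*\mu$ is purely atomic and fails Definition~\ref{def:non-conc} at every scale below the cylinder size. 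There is also no single $\mathcal{E}_R$ to aim at, since in your setup the exceptional set depends on the branch $\g$ through $\nu_\g$. (A secondary issue: the claimed bound $\|D^2\g\|\lesssim\|D\g\|^{3/2}$ is not the standard bounded-distortion estimate, which only yields $\|D^2\g\|\lesssim\|D\g\|$; with the latter the Taylor remainder on a unit-size $\Om_0$ is $O(R)$, not small. The paper handles this by linearizing on boxes of radius $\iota_\xi\asymp\|\xi\|^{-1/3}$.)

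The paper closes the gap through a specific device absent from your proposal. After a weak-stable holonomy step projects all pieces onto a common transversal---so that every integral is against the \emph{same} leafwise measure $\mu^u_{y_\rho}$ and there is one exceptional set---the sum over branches is squared via Cauchy--Schwarz, producing a double sum over pairs $(k,\ell)$ whose phase is governed by \emph{difference} frequencies $\b^j_{k,\ell}$. An explicit holonomy formula (Lemma~\ref{lem:phi_ell formula}, Corollary~\ref{cor:linearize phase}), together with a further refinement into sub-balls centered at points $v_j$, yields $\|\b^j_{k,\ell}\|\gtrsim\|\xi_{t+\t}\|\cdot|\langle v_j,w_{\rho,k}-w_{\rho,\ell}\rangle|$ (Lemma~\ref{lem:lamda and freq}), where $w_{\rho,\ell}\in\mf{n}^-$ records the stable coordinate of the $\ell$th branch center. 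Closeness of $\b^j_{k,\ell}$ therefore forces $w_{\rho,\ell}$ into a thin affine slab orthogonal to $v_j$, and the affine non-concentration of the PS measure (Proposition~\ref{prop:count close freqs}) bounds the number of such $\ell$ directly. This separation-then-counting is what feeds into flattening (Proposition~\ref{prop:apply flattening}) and finishes the proof. The passage to differences is the essential point: it replaces the intractable spherical constraint on the individual $\eta_\g$ by a linear separation condition on the $w_{\rho,\ell}$, which is exactly what affine non-concentration controls.
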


\begin{remark}\label{rem:covers}
    Our proof shows that the rate $\k$ provided by Theorem~\ref{thm:decay intro} depends only on non-concentration parameters of $\mu$; cf.~Section~\ref{sec:flatenning} for the precise definition of non-concentration. In particular, the rate of decay does not change upon replacing $\G$ by a finite index subgroup since the measure $\mu$ remains the same in this case~\cite{Roblin-2}.
\end{remark}

Theorem~\ref{thm:decay intro} generalizes prior work of Bourgain and Dyatlov in the case of hyperbolic surfaces \cite{BourgainDyatlov} and of Li, Naud, and Pan in the case of Schottky hyperbolic $3$-manifolds \cite{LiNaudPan}.
These prior results are based on Bourgain's sum-product theorem, while the proof of Theorem~\ref{thm:decay intro} is based on the estimate~\eqref{eq:flattening intro}, which was obtained using purely additive methods.

To keep the presentation clear, we restricted our setup to the case of convex cocompact groups.
Using the recurrence results obtained in~\cite{Khalil-Mixing}, the proof of Theorem~\ref{thm:decay intro} can be adapted to handle the general case of geometrically finite manifolds.

By the work of Dyatlov and Zahl~\cite{DyatlovZahl}, Theorem~\ref{thm:decay intro} is known to imply spectral bounds on the resolvent of the Laplace operator which yield an \textit{essential spectral gap}\footnote{That is to say a strip to the left of the critical line with at most finitely many poles. The interested reader is referred to the survey~\cite{Dyatlov-IntroFUP} for more on this topic.} for the resolvent as well as Selberg's zeta function. Moreover, the size of the essential spectral gap obtained this way depends explicitly on the decay rate $\k$ in Theorem~\ref{thm:decay intro}. 
In particular, Theorem~\ref{thm:decay intro} implies that the resolvent admits a uniform essential spectral gap over all finite covers of $\H^{d+1}/\G$; cf.~Remark~\ref{rem:covers}.
We note that this essential spectral gap result was obtained independently in~\cite{BackusLengTao} by different methods and an essential gap of size depending on $\G$ was obtained previously in~\cite{Naud-Cantor,PetkovStoyanov}.

\begin{acknowledgement}
    This author is partially supported under NSF grant DMS-2247713. 
\end{acknowledgement}

\section{Preliminaries}

    \label{sec:prelims}

	\subsection{Convex cocompact manifolds}

	The standard reference for the material in this section is~\cite{Bowditch1993}.
	Let $G $ denote the group of orientation preserving isometries of real hyperbolic space, denoted $\H^{d+1}$, of dimension $d\geq 1$.
    In particular, $G\cong \mrm{SO}(d+1,1)^0$.
	
    Fix a basepoint $o\in \H^{d+1}$. Then, $G$ acts transitively on $\H^{d+1}$ and the stabilizer $K$ of $o$ is a maximal compact subgroup of $G$.
    We shall identify $\H^{d+1}$ with $K\backslash G$.
    Denote by $A=\set{g_t:t\in\R}$ a one parameter subgroup of $G$ inducing the geodesic flow on the unit tangent bundle of $\H^{d+1}$.
    Let $M<K$ denote the centralizer of $A$ inside $K$ so that the unit tangent bundle $\uT\H^{d+1}$ may be identified with ${M}\backslash{G}$.
    In Hopf coordinates, we can identify $\uT\H^{d+1}$ with $\R\times (\partial\H^{d+1}\times \partial\H^{d+1} \setminus \Delta)$, where $\partial \H^{d+1}$ denotes the boundary at infinity and $\Delta$ denotes the diagonal.
    
    Let $\G<G$ be an infinite discrete subgroup of $G$.
    The limit set of $\G$, denoted $\L_\G$, is the set of limit points of the orbit $\G\cdot o$ on $\partial \H^{d+1}$.
    Note that the discreteness of $\G$ implies that all such limit points belong to the boundary.
    Moreover, this definition is independent of the choice of $o$ in view of the negative curvature of $\H^{d+1}$.
    We often use $\L$ to denote $\L_\G$ when $\G$ is understood from context.
    We say $\G$ is \textit{non-elementary} if $\L_\G$ is infinite.

    The \textit{non-wandering set} for the geodesic flow is the closure of the set of vectors in the unit tangent bundle whose orbit accumulates on itself.
    In Hopf coordinates, this set, denoted $\Omega$, coincides with the projection of $\R\times (\L_\G\times\L_\G-\Delta)$ mod $\G$.
	We say $\H^{d+1}/\G$ is \textit{convex cocompact} if $\Omega$ is compact, cf.~\cite{Bowditch1993}.
    Denote by $N^+$ the expanding horospherical subgroup of $G$ associated to $g_t$, $t\geq 0$.

   Given $g\in G$, we denote by $g^+$ the coset of $P^-g$ in the quotient $P^-\backslash G$, where $P^-=N^-AM$ is the stable parabolic group associated to $\set{g_t:t\geq 0}$.
   Similarly, $g^-$ denotes the coset $P^+g$ in $P^+\backslash G$.
   Since $M$ is contained in $P^\pm$, such a definition makes sense for vectors in the unit tangent bundle $M\backslash G$.
   Geometrically, for $v\in M\backslash G$, $v^+$ (resp.~$v^-$) is the forward (resp.~backward) endpoint of the geodesic determined by $v$ on the boundary of $\H^{d+1}$.
   Given $x\in G/\G$, we say $x^{\pm}$ belongs to $\L$ if the same holds for any representative of $x$ in $G$; this notion being well-defined since $\L$ is $\G$ invariant.

	\subsection*{Notation} Throughout the remainder of the article, we fix a discrete, Zariski-dense, convex cocompact group $\G$ of isometries of $\H^{d+1}$.

\subsection{Patterson-Sullivan measures}

    The \textit{critical exponent}, denoted $\d_\G$, is defined to be the infimum over all real number $s\geq 0$ such that the Poincar\'e series
    \begin{align}\label{eq:Poincare}
        P_\G(s,o) := \sum_{\g\in\G} e^{-s d(o,\g\cdot o)}
    \end{align}
    converges.
    This exponent coincides with the Hausdorff dimension of the limit set as well as the topological entropy of the geodesic flow on the quotient orbifold $\H^{d+1}/\G$.
    We shall simply write $\d$ for $\d_\G$ when $\G$ is understood from context.
    
    The \textit{Busemann function} is defined as follows: given $x,y\in \H^{d+1}$ and $\xi\in \partial \H^{d+1}$, let $\g:[0,\infty)\to\H^{d+1}$ denote a geodesic ray terminating at $\xi$ and define
    \begin{equation*}
        \b_\xi(x,y) = \lim_{t\to\infty}
        \dist (x,\g(t)) - \dist(y,\g(t)).
    \end{equation*}
    A $\G$-invariant conformal density of dimension $s$ is a collection of Radon measures $\set{\nu_x}$ on the boundary indexed by $ x\in \H^{d+1}$ which satisfy the following equivariance property:
    \begin{equation*}
        \g_\ast \nu_x = \nu_{\g x}, \qquad \text{and} \qquad
        \frac{d\nu_{y}}{d\nu_x}(\xi) = e^{-s\b_{\xi}(x,y)}, \qquad
        \forall x,y\in\H^{d+1}, \xi\in \partial \H^{d+1}, \g\in\G.
    \end{equation*}

    Patterson~\cite{Patterson} and Sullivan~\cite{Sullivan} showed the existence of a unique (up to scaling) $\G$-invariant conformal density of dimension $\d_\G$, denoted $\set{\ps_x:x\in \H^{d+1}}$.
    These measures are known as the \textit{Patterson-Sullivan measures}.
    We refer the reader to~\cite{Roblin} and~\cite{PaulinPollicottSchapira} and references therein for details of the construction in much greater generality.

\subsection{Stable and unstable foliations and leafwise measures}

    Recall that we fixed a basepoint $o\in \H^{d+1}$.
    In what follows, we use the following notation for pullbacks of the Patterson-Sullivan measures to orbits of $N^+$ under the visual map:
    \begin{equation}\label{eq:unstable conditionals}
        d\mu_x^u(n) = e^{\d_\G \b_{(nx)^+}(o,nx)}d\ps_o((nx)^+).
    \end{equation}
    These measures have simpler transformation formulas under the action of the geodesic flow and $N^+$ which makes them relatively easier to analyze than the Patterson-Sullivan measures directly.
    In particular, they satisfy the following equivariance property under the geodesic flow:
    \begin{equation}\label{eq:g_t equivariance}
        \mu_{g_tx}^u = e^{\d t} \mrm{Ad}(g_t)_\ast \mu_{x}^u.
    \end{equation}
    Moreover, it follows readily from the definitions that for all $n\in N^+$,
    \begin{align}\label{eq:N equivariance}
       (n)_\ast \mu_{nx}^u =  \mu_x^u,
    \end{align}
    where $(n)_\ast \mu_{nz}^u$ is the pushforward of $\mu_{nz}^u$ under the map $u\mapsto un$ from $N^+$ to itself.
    Finally, since $M$ normalizes $N^+$, these conditionals are $\Ad(M)$-invariant in the sense that for all $m\in M$,
    \begin{align}\label{eq:M equivariance}
        \mu^u_{mx}  = \Ad(m)_\ast\mu_x^u.
    \end{align}
    
\subsection{Local stable holonomy}\label{sec:holonomy}

    In this Section, we recall the definition of (stable) holonomy maps which are essential for our arguments. We give a simplified discussion of this topic which is sufficient in our homogeneous setting homogeneous.
    Let $x=u^-y$ for some $y\in \Omega$ and $u^-\in N^-_2$.
Since the product map $N^-\times A \times M \times N^+\to G$ is a diffeomorphism near identity, we can choose the norm on the Lie algebra so that the following holds. We can find maps $p^-:N_1^+\to P^-=N^-AM$ and $u^+:N_2^+\to N^+$ so that
\begin{align}\label{eq:switching order of N- and N+}
    nu^- = p^-(n)u^+(n), \qquad \forall n\in N_2^+.
\end{align}
Then, it follows by~\eqref{eq:unstable conditionals} that for all $n\in N_2^+$, we have
\begin{align*}
    d\mu_y^u(u^+(n)) = e^{\d \b_{(nx)^+}(u^+(n)y,nx)}d\mu_x^u(n).
\end{align*}
Moreover, by further scaling the metrics if necessary, we can ensure that these maps are diffeomorphisms onto their images.
In particular, writing $\Phi(nx)=u^+(n)y$, we obtain the following change of variables formula: for all $f\in C(N_2^+)$,
\begin{align}\label{eq:stable equivariance}
    \int f(n) \;d\mu_x^u(n) = 
    \int f((u^+)^{-1}(n)) e^{-\d \b_{\Phi^{-1}(ny)}(ny,\Phi^{-1}(ny))}\;d\mu_y^u(n).
\end{align}

\begin{remark}\label{rem:commutation of stable and unstable}
To avoid cluttering the notation with auxiliary constants, we shall assume that the $N^-$ component of $p^-(n)$ belongs to $N_2^-$ for all $n\in N_2^+$ whenever $u^-$ belongs to $N_1^-$.
\end{remark}


\subsection{Notational convention} Throughout the article, given two quantities $A$ and $B$, we use the Vinogradov notation $A\ll B$ to mean that there exists a constant $C\geq 1$, possibly depending on $\G$ and the dimension of $G$, such that $|A|\leq C B$. In particular, this dependence on $\G$ is suppressed in all of our implicit constants, except when we wish to emphasize it.
    The dependence on $\G$ may include for instance the diameter of the complement of our choice of cusp neighborhoods inside $\Omega$ and the volume of the unit neighborhood of $\Omega$.
    We write $A\ll_{x,y} B$ to indicate that the implicit constant depends parameters $x$ and $y$.
    We also write $A=O_x(B)$ to mean $A\ll_x B$.


\subsection{The $L^2$-flattening theorem}
\label{sec:flatenning}

In light of the formula~\eqref{eq:unstable conditionals}, Theorem~\ref{thm:decay intro} amounts to studying the Fourier transform of the measures $\mu_x^u$. Moreover, the isomorphism $N^+\cong \R^d$ allows us to view these measures as living on Euclidean space.

The key ingredient in the proof of Theorem~\ref{thm:decay intro} is~\cite[Corollary 1.8]{Khalil-Mixing} which relates Fourier decay properties of measures on Euclidean space to the non-concentration properties of such measures near proper affine subspaces. 
This result in particular implies that PS measures enjoy polynomial Fourier decay outside of a very sparse set of frequencies.

We formulate here a special case of the aforementioned result which suffices for convex cocompact manifolds and refer the reader to~\cite[Theorem 11.5]{Khalil-Mixing} for a more general result that holds in the presence of cusps.

\begin{definition}\label{def:non-conc}
We say that Borel measure $\mu$ on $\R^d$ is \textit{uniformly affinely non-concentrated} if for every $\e>0$, there exists $\d(\e)>0$ so that $\d(\e)\to 0$ as $\e\to 0$ and for all $x\in \R^d$, $0<r\leq 1$, and every affine hyperplane $W<\R^d$, we have

        \begin{align}\label{eq:uniform affine non-conc}
            \mu( W^{(\e r)} \cap B(x,r)) \leq \d(\e) \mu(B(x,r)),
        \end{align}
        where $W^{(r)}$ and $B(x,r)$ denote the $r$-neighborhood of $V$ and the $r$-ball around $x$ respectively.
        We refer to $\d(\e)$ as \textit{the non-concentration parameters} of $\mu$.
\end{definition}

\begin{thm}[{\cite[Corollary 1.8]{Khalil-Mixing}}]
\label{thm:flattening}
Let $\mu$ be a compactly supported Borel probability measure on $\R^d$ which is uniformly affinely non-concentrated and denote by $\hat{\mu}$ its Fourier transform.
Then, for every $\e>0$, there is $\d>0$ such that for all $T\geq 1$,
\begin{align*}
    \left|\set{\norm{\xi}\leq T: |\hat{\mu}(\xi)|>T^{-\d}}\right| = O_\e (T^\e),
\end{align*}
where $|\cdot|$ denotes the Lebesgue measure on $\R^d$.
The implicit constant depends only on the non-concentration parameters of $\mu$ and the diameter of its support.
\end{thm}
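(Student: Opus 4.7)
The plan is to deduce this sparse exceptional set estimate from a high-order $L^{2}$-flattening bound for convolutions of $\mu$, along the lines of the Bourgain--Gamburd scheme adapted to the Euclidean setting.

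First I would reduce the statement to the following claim: for every $\e > 0$, there exist $k = k(\e) \in \N$ and a smooth bump $\chi_\rho$ concentrated at scale $\rho = T^{-1}$, chosen so that $|\hat{\chi}_\rho| \gtrsim 1$ on $B(0,T)$, such that
\begin{equation*}
\| \mu^{*k} * \chi_\rho \|_{L^{2}(\R^d)}^{2} \leq T^{\e/2}.
\end{equation*}
Granted this, Plancherel's identity rewrites the left side as $\int |\hat{\mu}(\xi)|^{2k} |\hat{\chi}_\rho(\xi)|^{2} \, d\xi$, and restricting to the bad set $E_T := \{\|\xi\| \leq T : |\hat{\mu}(\xi)| > T^{-\d}\}$ yields $|E_T| \cdot T^{-2k\d} \lesssim T^{\e/2}$. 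Choosing $\d$ small relative to $\e/(4k)$ gives $|E_T| = O(T^{\e})$, as required.

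The substantive content lies in the flattening estimate. I would argue by induction on the $L^{2}$ norm: each additional self-convolution should decrease $\| \mu^{*j} * \chi_\rho \|_2^2$ by a definite polynomial factor, \emph{unless} $\mu^{*j}$ already exhibits a form of approximate additive structure. The mechanism for extracting structure from failure of flattening is a continuous-setting inverse theorem of Balog--Szemer\'edi--Gowers type: if the convolution fails to flatten further at the next step, then, up to a controlled loss of mass, $\mu$ must concentrate in a thin neighborhood of some proper affine subspace at some intermediate scale in $[\rho, 1]$. The uniform affine non-concentration hypothesis \eqref{eq:uniform affine non-conc} prohibits this, so the induction can continue, and after $k = k(\e)$ steps the $L^2$ norm is driven below $T^{\e/2}$.

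The principal obstacle is producing a workable inverse theorem sensitive to \emph{all} intermediate scales rather than just the finest scale $\rho$. One cannot merely extract additive structure at scale $\rho$ and then invoke non-concentration at that scale; the argument must permit structure at any scale between $\rho$ and $1$, and the exponential losses per scale must be carefully tracked against the non-concentration parameters $\d(\e)$. Executing this multiscale bookkeeping --- coordinating the entropy decrement per convolution, the scale at which structure emerges, and the admissible non-concentration loss --- is the heart of the matter and the step that demands the full technical machinery developed in the cited reference.
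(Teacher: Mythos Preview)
The paper does not supply a proof of this theorem; it is quoted as \cite[Corollary~1.8]{Khalil-Mixing} and used as a black box, so there is no argument in the paper to compare your proposal against. Your outline---reduce to an $L^2$ bound on $\mu^{*k}*\chi_\rho$ via Plancherel, then iterate a flattening step whose failure is governed by a Balog--Szemer\'edi--Gowers-type inverse theorem forcing affine concentration at some intermediate scale, which the hypothesis rules out---is consistent with the ``purely additive methods'' the introduction attributes to \cite{Khalil-Mixing}, and is a reasonable sketch of how such results are proved. As you yourself acknowledge, the substantive difficulty is the multiscale inverse step, and you have not carried it out; so what you have written is an accurate roadmap rather than a proof, and in that sense it matches the paper's own treatment, which also defers entirely to the cited reference.
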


We note that Theorem~\ref{thm:flattening} was obtained by different methods for measures on the real line in~\cite{RossiShmerkin}.

We will be able to apply Theorem~\ref{thm:decay intro} to PS measures (or, more precisely, their shadows $\mu_x^u$) thanks to the following proposition.

\begin{prop}[{\cite[Corollary 12.2]{Khalil-Mixing}}]
\label{prop:aff non-conc of PS}
    For every $x\in N_1^-\Omega$, the measure $\mu_x^u\left|_{N_1^+}\right.$ is uniformly affinely non-concentrated in the sense of Definition~\ref{def:non-conc}, with uniform parameters in $x$.
\end{prop}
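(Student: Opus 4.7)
The plan is to argue by contradiction, reducing via dynamical self-similarity to a unit-scale claim and then ruling out the resulting concentration using Zariski density of $\G$.

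\textbf{Rescaling to unit scale.} Given a ball $B(n_0,r)\subset N^+$ and an affine hyperplane $W$, I would combine the $N^+$-translation by $n_0^{-1}$ with $\Ad(g_{-\log r})$ to send $B(n_0,r)$ to the unit ball $B(e,1)$ and the $\e r$-neighborhood of $W$ to an $\e$-neighborhood of a new affine hyperplane $W'$. By the equivariance relations~\eqref{eq:g_t equivariance} and~\eqref{eq:N equivariance}, this transformation pulls $\mxu$ back, up to a uniform conformal factor $r^\d$, to $\mu^u_{x'}$ at a new base point $x'$. Using $\G$-equivariance of the PS density together with convex cocompactness of $\Omega$, one can further apply a suitable $\g\in\G$ so that $x'$ lies in a fixed compact subset $K\subset N_1^-\Omega$. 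The $\G$-translation is not affine in $N^+$-coordinates, but on bounded regions it is bi-Lipschitz with uniform constants, so it preserves the class of affine hyperplanes up to a bounded multiplicative loss in the width parameter $\e$. This reduces matters to proving the unit-scale non-concentration for $\set{\mu^u_x:x\in K}$ uniformly in $x\in K$ and in the hyperplane.

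\textbf{Compactness at unit scale.} Suppose this unit-scale claim fails: there exist $\d_0>0$ together with $\e_n\to 0$, $x_n\in K$, and affine hyperplanes $W_n$ with
\[
\mu^u_{x_n}\bigl(W_n^{(\e_n)}\cap B(e,1)\bigr) \geq \d_0\, \mu^u_{x_n}(B(e,1)).
\]
Extract subsequential limits $x_n\to x^*\in K$, $W_n\to W^*$, and (after normalization) $\mu^u_{x_n}|_{B(e,1)}\to \nu^*$ weakly. Continuity of $y\mapsto \mu^u_y$ in the base point identifies $\nu^*$ with the normalized restriction of $\mu^u_{x^*}$; since $\e_n\to 0$, the inequality passes to the limit to give $\mu^u_{x^*}(W^*\cap B(e,1))>0$, so the PS leafwise measure assigns positive mass to a proper affine hyperplane.

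\textbf{Zariski density.} The support of $\mu^u_{x^*}$ lies in the pullback of $\L_\G$ under the visual map, so it suffices to show $\ps_o(W^*\cap \L_\G)=0$ for every proper affine hyperplane $W^*$. Since $\G$ is Zariski-dense in $\mrm{SO}(d+1,1)^0$, the limit set is Zariski-dense in $\partial \H^{d+1}$ and in particular is contained in no proper real algebraic subvariety. If some hyperplane $W^*$ carried positive PS mass, ergodicity of the $\G$-action on $(\L_\G,\ps_o)$ would force the $\G$-saturation $\bigcup_{\g\in\G}\g W^*$ to have full PS measure; but this saturation is a countable union of proper algebraic hypersurfaces and so cannot contain a Zariski-dense set, a contradiction.

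The main obstacle I anticipate is the rescaling step, specifically controlling the distortion caused by the $\G$-translation used to return $x'$ to a compact set: one needs that the bi-Lipschitz behavior on bounded regions is uniform in the choice of $\g\in\G$, which in turn hinges on the uniform contraction properties of the stable horospherical foliation established via convex cocompactness of $\Omega$. A secondary technical point is the weak continuity of $y\mapsto \mxu$ together with a uniform positive lower bound on $\mxu(B(e,1))$ over $K$; both follow from the standard construction of PS measures as weak-$*$ limits combined with the local density estimates available in the convex cocompact setting.
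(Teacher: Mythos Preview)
The paper does not prove this proposition; it is quoted directly from~\cite[Corollary 12.2]{Khalil-Mixing}, and the only additional content here is the remark that the non-concentration parameters depend on the injectivity radius. So there is no in-paper argument to compare against, and your proposal should be judged on its own.

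Your rescaling and compactness steps are essentially correct in spirit. One minor confusion: since $x$ already lives in $G/\G$, there is no separate ``$\G$-translation'' to perform after flowing by $g_{-\log r}$; the basepoint $g_{-\log r}n_0 x$ automatically lies in (a bounded neighborhood of) the compact set $\Omega$, and the rescaling in the $N^+$-coordinate is genuinely affine. So the worry about bi-Lipschitz distortion under $\G$ is misplaced, and the reduction to unit scale is in fact cleaner than you suggest. The compactness argument then correctly reduces the proposition to the claim that no affine hyperplane $W^*\subset N^+$ carries positive $\mu^u_{x^*}$-mass.

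The genuine gap is in your Zariski-density step. From $\ps_o(W^*)>0$ and ergodicity you correctly deduce that the $\G$-saturation $\bigcup_\g \g W^*$ has full PS measure, but the sentence ``a countable union of proper algebraic hypersurfaces cannot contain a Zariski-dense set'' is false: $\Q^d$ is Zariski-dense in $\R^d$ and is a countable union of points. More to the point, full PS measure does not imply containment of $\L_\G$, and when $\d_\G\leq d-1$ (e.g.\ thin Schottky groups) there is no dimension obstruction to $\L_\G$ sitting inside countably many $(d-1)$-spheres. So as written the contradiction does not close.

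The fix is to iterate your own rescaling once more. The PS measure is Ahlfors $\d_\G$-regular, hence doubling, so the Lebesgue density theorem gives a point $p\in W^*\cap\mrm{supp}\,\mu^u_{x^*}$ at which $\mu^u_{x^*}(B(p,r)\cap W^*)/\mu^u_{x^*}(B(p,r))\to 1$. Running your Step~1 rescaling centered at $p$ and passing to a limit exactly as in Step~2 produces a basepoint $x^{**}$ for which $\mu^u_{x^{**}}$ is \emph{supported} on a hyperplane, not merely giving it positive mass. That forces a relatively open piece of $\L_\G$ to lie in a proper round sphere $S\subset\partial\H^{d+1}$; by minimality of the $\G$-action on $\L_\G$ one then gets $\L_\G\subset S$, hence $\G\subset\mrm{Stab}_G(S)$, a proper algebraic subgroup, contradicting Zariski density. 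With this modification your outline becomes a valid (qualitative) proof.
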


\begin{remark}
    It is shown in~\cite[Corollary 12.2]{Khalil-Mixing} that the non-concentration parameters of $\mu_x^u$ depend only on the injectivity radius at $x$, which is in turn uniformly bounded above and below on a neighborhood of the non-wandering set $\Omega$ due to convex cocompactness of $\G$.
\end{remark}

\section{Proof of Theorem~\ref{thm:decay intro}}

The goal of this section is to provide the proof of Theorem~\ref{thm:decay intro}.
In light of the formula~\eqref{eq:unstable conditionals}, it suffices to prove polynomial Fourier decay for the measures $\mu_x^u$ for $x$ in the non-wandering set $\Omega$.

\subsection*{Norms and Lie algebras}

    In what follows, we denote by $\mf{n}^+$ and $\mf{n}^-$ the Lie algebras of $N^+$ and $N^-$ respectively. 
    We fix an isomorphism of $\mf{n}^+$ and $\mf{n}^-$ using a Cartan involution sending $g_{t}$ to $g_{-t}$. 
    Moreover, we fix an isomorphism of $\mf{n}^+$ (and hence of $\mf{n}^-$) with $\R^d$.
    Finally, we fix a Euclidean inner product on $\R^d\cong\mf{n}^+\cong \mf{n}^-$ denoted with $\langle \cdot,\cdot\rangle$ which is invariant by the Adjoint action of the group $M\cong \mrm{SO}_d(\R)$.

\subsection*{Reduction to linear phases}
We begin with the following elementary lemma which reduces the proof to the study of linear phase functions. Its proof based on using a partition of unity on the support of the integral and Taylor expanding the phase function. The details are left to the reader.

\begin{lem}\label{lem:reduce to linear}
To prove Theorem~\ref{thm:decay intro}, it suffices to show that there exists $\k>0$ so that for all $0\neq \xi\in  \R^{d}$, $x\in \Omega$, and $\psi\in C_c^1(N_1^+)$, we have
\begin{align}\label{eq:linear decay}
    \int_{N_1^+} e^{ i \langle \xi, n\rangle} \psi(n)\;d\mu^u_x(n)
    \ll_\G \norm{\psi}_{C^1} \norm{\xi}^{-\k},
\end{align}
where, by abuse of notation, if $n=\exp(v)$ for some $v\in \mf{n}^+\cong \R^d$, we let $\langle \xi,n\rangle := \langle \xi, v\rangle$.
\end{lem}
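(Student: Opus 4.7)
Plan: I follow the partition-of-unity plus Taylor expansion strategy indicated by the author. First, using a smooth partition of unity on $\L_\G$ subordinate to a finite cover by charts of the visual map based at points $x_i\in\Omega$, and invoking~\eqref{eq:unstable conditionals}, the integral in Theorem~\ref{thm:decay intro} decomposes into finitely many integrals of the form
\[
J_\l := \int_{N_1^+} e^{2\pi i\l\tilde\vp(n)}\tilde\psi(n)\,d\mu^u_{x_i}(n),
\]
where $\tilde\vp\in C^2$ is the pullback of $\vp$ and $\tilde\psi\in C_c^1(N_1^+)$ absorbs $\psi$, the density $e^{-\d\b}$ from~\eqref{eq:unstable conditionals}, and the Jacobian of the visual map. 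Convex cocompactness of $\G$ and the smoothness of the visual map yield uniform control $\|\tilde\vp\|_{C^2} + \|\tilde\psi\|_{C^1} \ll A$ and $\inf_{\supp\mu^u_{x_i}}\|\nabla\tilde\vp\| \gg a$.

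Second, fix a scale $r = |\l|^{-\theta} \in (0,1)$ with $\theta>0$ to be chosen, cover $\supp\mu^u_{x_i}\cap N_1^+$ by balls $\{B(n_j,r)\}$ with bounded overlap (of cardinality $\ll r^{-\d_\G}$, by the Frostman-type bound implied by Proposition~\ref{prop:aff non-conc of PS}), and take a subordinate smooth partition of unity $\{\chi_j\}$ with $\|\chi_j\|_{C^1}\ll 1/r$. On each ball, Taylor-expanding $\tilde\vp$ to first order gives
\[
\tilde\vp(n) = \tilde\vp(n_j) + \langle\nabla\tilde\vp(n_j), n - n_j\rangle + R_j(n),\qquad |R_j|\leq \tfrac{Ar^2}{2},\;|\nabla R_j|\leq Ar.
\]
Absorb the nonlinear remainder into the test function by setting $F_j(n) := e^{2\pi i\l R_j(n)}\tilde\psi(n)\chi_j(n)$, so that $F_j\in C_c^1(N_1^+)$ is supported in $B(n_j,r)$ with $\|F_j\|_{C^1}\ll A/r + A^2|\l|r$. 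Applying~\eqref{eq:linear decay} to $F_j$ with frequency $\xi_j := 2\pi\l\nabla\tilde\vp(n_j)$ (so $\|\xi_j\|\geq 2\pi|\l|a$) and summing over $j$ yields
\[
|J_\l|\ll r^{-\d_\G}\bigl(A/r + A^2|\l|r\bigr)(|\l|a)^{-\k},
\]
and the desired polynomial decay $|J_\l|\ll |\l|^{-\k'}$ for some $\k'>0$ follows by choosing $\theta$ to balance the two contributions inside the parenthesis.

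The main obstacle is the quantitative balancing in this last step. With the natural choice $r\sim|\l|^{-1/2}$, both contributions have exponent $(\d_\G+1)/2 - \k$, so the resulting rate $\k'$ is of order $\k - (\d_\G+1)/2$; hence the linear-decay hypothesis~\eqref{eq:linear decay} must ultimately be established with $\k > (\d_\G+1)/2$ for the reduction to produce an effective bound. Verifying this consistency condition is the main technical point of the reduction; everything else (the chartwise reduction, the Jacobian bookkeeping, and the control of $C^1$ norms when absorbing $e^{2\pi i\l R_j}$) is routine given the smoothness afforded by convex cocompactness.
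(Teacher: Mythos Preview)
Your strategy matches the paper's one-line description (partition of unity plus Taylor expansion), and the chartwise reduction via~\eqref{eq:unstable conditionals} is fine. The gap is in the last paragraph. You correctly compute that your bound
\[
|J_\l|\ll r^{-\d_\G}\bigl(A/r + A^2|\l|r\bigr)(|\l|a)^{-\k}
\]
only yields decay when $\k>(\d_\G+1)/2$, and you propose to resolve this by ``verifying this consistency condition''. That is not viable: the $\k$ produced by the paper's proof of~\eqref{eq:linear decay} is governed by the flattening exponent of Theorem~\ref{thm:flattening} and the non-concentration exponent of Proposition~\ref{prop:count close freqs}, both of which are small and carry no lower bound of the form $\k>(\d_\G+1)/2$. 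So as written, your reduction does not close.

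The missing ingredient is the self-similarity of $\mu_x^u$ under the geodesic flow, i.e.~the equivariance relations~\eqref{eq:g_t equivariance}--\eqref{eq:N equivariance}. After localizing to $B(n_j,r)$ with $n_j\in\supp\mu_x^u$, translate by $n_j$ and rescale by $g_t$ with $e^{-t}=r$: this sends the ball to $N_1^+$, replaces $d\mu_x^u$ by $r^{\d_\G}\,d\mu_{x_j}^u$ for some $x_j=g_t n_j x\in\Omega$, and replaces the frequency $\xi_j$ by $r\xi_j$. The rescaled amplitude $\tilde F_j(m):=F_j(n_j+rm)$ now satisfies $\|\tilde F_j\|_{C^1}\ll 1+A|\l|r^2$: the $1/r$ from $\chi_j$ disappears because the cut-off becomes a unit-scale bump, and the remainder contribution drops from $|\l|r$ to $|\l|r^2$ since $\nabla R_j$ gains an extra factor of $r$ under rescaling. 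Applying~\eqref{eq:linear decay} at $x_j$ and summing, the factor $r^{\d_\G}$ from the measure cancels the counting factor $r^{-\d_\G}$, giving
\[
|J_\l|\ll (1+A|\l|r^2)\,(a\,r|\l|)^{-\k},
\]
and the choice $r=|\l|^{-1/2}$ yields $|J_\l|\ll_{A,a}|\l|^{-\k/2}$ for \emph{any} $\k>0$. This is the step that makes the reduction unconditional.
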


In the remainder of this section, we fix $\xi\in \R^d$ and $\psi\in C_c^1(N_1^+)$. Our goal is to prove the estimate~\eqref{eq:linear decay}.

\subsection*{Partitions of unity and flow boxes}

Let $\iota$ denote the smaller of $1$ and the injectivity radius of $\Omega$ and set
\begin{align}\label{eq:iota_xi}
    \iota_\xi := \iota/\norm{\xi}^{1/3}.
\end{align}

We let $\Pcal_\xi$ denote a partition of unity of the unit neighborhood of $\Omega$ so that each $\rho\in \Pcal_\xi$ is $M$-invariant and supported inside a flow box $B_\rho$ of radius\footnote{That is the support of the projection of the $M$-invariant function $\rho$ is contained in such a flow box.} $\iota_\xi$.
With the aid of the Vitali covering lemma, we can arrange for the collection $\set{B_\rho}$ to have a uniformly bounded multiplicity, depending only on the dimension of $G$.
We can choose such a partition of unity so that for all $\rho\in \Pcal_\xi$,
\begin{equation}\label{eq:norm of rho}
    \norm{\rho}_{C^1} \ll \iota_\xi^{-1}.
\end{equation}

We also introduce the following subcollection of $\Pcal_\xi$:
\begin{align}\label{eq:subpartition of unity near omega}
    \Pcal_\xi^0 := \set{\rho\in \Pcal_\xi: B_\rho\cap N_{1/2}^-\Omega \neq \emptyset}.
\end{align} 
Note that the cardinality of $\Pcal_\xi^0$ can be bounded as follows.
Indeed, since $\G$ is geometrically finite, the unit neighborhood of $\Omega$ has finite volume. Moreover, the flow boxes $B_\rho$ with $\rho \in \Pcal^0_\xi$ are all contained in such a unit neighborhood and have uniformly bounded multiplicity; cf.~\eqref{eq:subpartition of unity near omega}. 
Finally, each $B_\rho$ has radius $\iota_\xi$ for all $\rho\in\Pcal_\xi$.
Thus, letting $D\in\N$ be such that the Lebesgue measure of $B_\rho$ is $\asymp \iota_\xi^D$, we see that
\begin{equation}\label{eq:number of flow boxes}
    \#\Pcal_\xi^0 \ll_\G \iota_\xi^{-(2D+1)}.
\end{equation}
Note that the dimension of $X$ is $2D+1+\dim(M)$, however the bound above involves $2D+1$ only since each flow box is $M$-invariant.

 \subsection*{Transversals}
We fix a system of transversals $\{T_\rho\}$ to the strong unstable foliation inside the boxes $B_\rho$.
Since $B_\rho$ meets $N_{1/2}^-\Omega$ for all $\rho\in \Pcal_\xi^0$, we fix some $y_\rho$ in the intersection $B_\rho\cap N_{1/2}^-\Omega$. 
In this notation, we can find neighborhoods of identity $P^-_\rho \subset P^-= MAN^-$ and $N^+_\rho \subset N^+$ such that
\begin{equation}\label{eq:box notation}
    B_\rho = N^+_\rho P^-_\rho \cdot y_\rho, \qquad T_\rho = P^-_\rho \cdot y_\rho.
\end{equation}
We also let $M_\rho, A_\rho$, and $N^-_\rho$ be neighborhoods of identity in $M, A$ and $N^-$ respectively so that $P^-_\rho=M_\rho A_\rho N^-_\rho$.

\subsection*{Time smoothing}
Let $T>0$ be a large parameter to be chosen depending on $\norm{\xi}$.
Let $q:\R\to [0,1]$ be a smooth bump function supported in the interval $(T-1,T+1)$ and having the property
\begin{align}\label{eq:q(t)}
    \int_\R q(t)\;dt=1.
\end{align}
Using our partition of unity, we can write 
\begin{align}\label{eq:initial partition}
    \int_{N_1^+} e^{ i \langle \xi, n\rangle} \psi(n)\;d\mu^u_x(n)
    = \int_\R q(t)
    \sum_{\rho\in \Pcal_\xi} \int_{N_1^+}e^{i\langle \xi,n\rangle} \psi(n) \rho(g_{t} nx) &\;d\mu_x^u(n)dt.
\end{align}

\subsection*{Saturation and post-localization}
Fix some arbitrary $t>0$.
Our first step is to partition the integral in~\eqref{eq:initial partition} over $N_1^+$ into pieces according to the flow box they land in under flowing by $g_t$.
To simplify notation, we write
\begin{equation}\label{eq:y_t}
      x_t:= g_t x.
\end{equation}
We denote by $N_1^+(t)$ a neighborhood of $N_1^+$ defined by the property that the intersection 
$$ B_\rho\cap (\mrm{Ad}(g_t)(N_1^+(t))\cdot x_t)$$
consists entirely of full local strong unstable leaves in $B_\rho$.
We note that since $\mrm{Ad}(g_t)$ expands $N^+$ and $B_\rho$ has radius $<1$, $N_1^+(t)$ is contained inside $N_2^+$.
Since $\phi$ is supported inside $N_1^+$, we have
\begin{equation}\label{eq:N_1^+(t)}
    \chi_{N_1^+}(n)\psi(n)  = \chi_{N_1^+(t)}(n)\psi(n), \qquad \forall n \in N^+.
\end{equation}
For simplicity, we set
\begin{equation*}
    \xi_t := e^{-t}\xi, \qquad 
    \psi_t(n) := \psi(\Ad(g_t)^{-1} n ), \qquad \Acal_t :=  \mrm{Ad}(g_t)(N_1^+(t)).
\end{equation*}
For $\rho\in \Pcal_\xi$, we let $\Wcal_{\rho,t}$ denote the collection of connected components of the set
\begin{equation*}
    \set{n\in\Acal_t: nx_t\in B_\rho}.
\end{equation*}

Moreover, since $x\in \Omega$, we see that the the restriction of the support of $\mu_{x}^u$ to $N_1^+$ consists of points $n\in N^+$ with $nx \in \Omega$.
This implies that the non-zero summands in the right side of~\eqref{eq:initial partition} necessarily correspond to those $\rho$ in $\Pcal_\xi^0$.

In view of~\eqref{eq:N_1^+(t)}, changing variables using~\eqref{eq:g_t equivariance} yields
\begin{align}\label{eq:localize space}
  \sum_{\rho\in \Pcal_\xi} \int_{N_1^+}e^{i\langle \xi,n\rangle} \psi(n) \rho(g_{s+t} nx) &\;d\mu_x^u(n)
 \nonumber\\
&=e^{-\d t } 
    \sum_{\rho\in \Pcal_\xi^0, W\in \Wcal_{\rho,t}}
    \int_{n\in W}  e^{i\langle \xi_t,n\rangle} \psi_t(n) 
        \rho(nx_t)  \; d\mu_{x_t}^u(n).
\end{align}

\subsection*{Centering the integrals}
It will be convenient to center all the integrals in~\eqref{eq:localize space} so that their basepoints belong to the transversals $T_\rho$ of the respective flow box $B_\rho$; cf.~\eqref{eq:box notation}.

Let $I_{\rho,t}$ denote an index set for $\Wcal_{\rho,t}$.
For $W\in \Wcal_{\rho,t}$ with index $\ell\in I_{\rho,t}$, let $n_{\rho,\ell}\in W$, $m_{\rho,\ell}\in M_\rho$, $n_{\rho,\ell}^-\in N^-_\rho$, and $t_{\rho,\ell}\in(-\iota_\xi,\iota_\xi)$ be such that
\begin{equation}\label{eq:centers}
    x_{\rho,\ell} := n_{\rho,\ell}\cdot x_t = n_{\rho,\ell}^- m_{\rho,\ell}g_{t_{\rho,\ell}}  \cdot y_\rho \in T_\rho.
\end{equation}
Note that since $x$ belongs to $\Omega$ and $y_\rho\in N_{1/2}^-\Omega$, we have that
\begin{equation}\label{eq:x_rho,ell in omega}
    x_{\rho,\ell}\in \Omega.
\end{equation}

For each such $\ell$ and $W$, let us denote $W_\ell=Wn_{\rho,\ell}^{-1}$ and set
\begin{equation}\label{eq:phi tilde}
    \widetilde{\psi}_{\rho,\ell}(t,n) :=
          \psi_t( n n_{\rho,\ell}),
          \qquad 
          \tilde{\chi}_{\rho,\ell}(t,n) := 
          \exp(i \langle\xi_t, nn_{\rho,\ell} \rangle).
\end{equation} 
Changing variables using~\eqref{eq:g_t equivariance} and~\eqref{eq:N equivariance}, we can rewrite the right side of~\eqref{eq:localize space} as follows: 
\begin{align}\label{eq:center integrals on transversal}
         e^{-\d t} 
         \sum_{\rho\in \Pcal^0_\xi, W\in \Wcal_{\rho,t}}
          &\int_{n\in W}  e^{i\langle \xi_t,n\rangle}
          \psi_t( n  )
         \rho(nx_t)  \;d\mu_{x_t}^u(n) 
         \nonumber\\
         &=e^{-\d t}
         \sum_{\rho\in \Pcal_\xi^0} 
         \sum_{\ell\in I_{\rho,t}} 
           \int_{n\in W_\ell}  
           \widetilde{\chi}_{\rho,\ell}(t,n)
          \widetilde{\psi}_{\rho,\ell}(t,n) \rho(nx_{\rho,\ell})
            \;d\mu_{x_{\rho,\ell}}^u(n) .
\end{align}

\subsection*{Weak-stable holonomy} Fix some $\rho\in \Pcal_\xi^0$.
Recall the points $y_{\rho}\in T_\rho$ and $n^-_{\rho,\ell}\in N^-_\rho$ satisfying~\eqref{eq:centers}.
Let
\begin{align}\label{eq:p_rho,ell}
    p^-_{\rho,\ell} := 
    n_{\rho,\ell}^- m_{\rho,\ell}g_{t_{\rho,\ell}} .
\end{align}
The product map $M\times N^-\times A \times N^+\to G$ is a diffeomorphism on a ball of radius $1$ around identity; cf.~Section~\ref{sec:holonomy}. 
Hence, given $\ell\in I_{\rho,t}$, we can define maps $\phi_\ell$ and $\tilde{p}^-_\ell$ from $ W_\ell$ to $N^+$ and $P^-$ respectively by the following formula
\begin{equation}\label{eq:stable hol map}
    n p_{\rho,\ell}^- 
    =\tilde{p}^-_{\ell}(n)\phi_\ell(n).
\end{equation}
We suppress the dependence on $\rho$ and $t$ to ease notation.
Then, $\phi_\ell$ induces a map between the strong unstable manifolds of $x_{\rho,\ell}$ and $y_\rho$, also denoted $\phi_\ell$, and defined by
\begin{equation*}
    \phi_\ell( nx_{\rho,\ell}) = \phi_\ell(n)y_\rho.
\end{equation*}
In particular, this induced map coincides with the local weak stable holonomy map inside $B_\rho$.

Note that we can find a neighborhood $W_\rho\subset N^+$ of identity of radius $\asymp \iota_\xi$ such that
\begin{equation}\label{eq:W_rho}
    \phi_\ell(W_\ell) \subseteq W_\rho,
\end{equation}
for all $\ell\in I_{\rho,t}$.
Moreover, by shrinking the radius $\iota_\xi$ of the flow boxes by an absolute amount (depending only on the metric on $G$) if necessary, we may assume that all the maps $\phi_\ell$ are invertible on $ W_{\rho}$.
Hence, we can define the following:
\begin{align}\label{eq:def of tau_ell}
     p^-_\ell(n) &:= \tilde{p}_\ell^-(\phi_\ell^{-1}(n)) \in P^-,
     \qquad
     \overline{\widetilde{\psi}}_{\rho,\ell}(t,n) := J\phi_\ell(n)\times  
    \widetilde{\psi}_{\rho,\ell}(t,\phi_\ell^{-1}(n)),
    \nonumber\\
    \chi_{\rho,\ell}(t,n) &:= \widetilde{\chi}_{\rho,\ell}(t,\phi_\ell^{-1}(n)), \qquad
    \rho_\ell(n) := \rho(p^-_\ell(n)ny_\rho),
\end{align}
where $J\phi_\ell$ denotes the Jacobian of the change of variable $\phi_\ell$; cf.~\eqref{eq:stable equivariance}.

Changing variables in the right side of~\eqref{eq:center integrals on transversal}, we obtain
\begin{align}\label{eq:stable hol}
    \sum_{\ell\in I_{\rho,t}} 
           \int_{n\in W_\ell}  
           \widetilde{\chi}_{\rho,\ell}(t,n)
          \widetilde{\psi}_{\rho,\ell}(t,n) \rho(nx_{\rho,\ell})
            \;d\mu_{x_{\rho,\ell}}^u(n) 
          =  \sum_{\ell\in I_{\rho,t}} \int_{W_\rho}
         \chi_{\rho,\ell}(t,n)
         \overline{\widetilde{\psi}}_{\rho,\ell}(t,n)
    \rho_\ell(n)    \;d\mu_{y_{\rho}}^u(n).
\end{align}

\subsection*{Linearizing the phase}

We have the following formula for the functions $\phi_\ell$ which are responsible for the oscillation of $\chi_{\rho,\ell}$ along $N^+$.
The elementary proof of this lemma is given in Section~\ref{sec:temporal function}.

\begin{lem}
    \label{lem:phi_ell formula}
    Let $p^-_{\rho,\ell}$ be as in~\eqref{eq:p_rho,ell} and let $w_{\rho,\ell}\in \mf{n}^-$ be such that $n^-_{\rho,\ell}=\exp(w_{\rho,\ell})$. Then, for every $n=\exp(v)\in N_{1/2}^+$, we have
    \begin{align*}
        \log \phi_\ell(n) =
        \frac{1}{ e^{t_{\rho,\ell}}
        \tilde{\l}_\ell(v)} m^{-1}_{\rho,\ell} \cdot 
        \left(v + \frac{\norm{v}^2}{2}w_{\rho,\ell}\right),
    \end{align*}
    where $\log \phi_\ell(n)$ is viewed as an element of $\mf{n}^+$ and $\l_\ell:N_{1/2}^+\to \R_+$ is given by
    \begin{align*}
        \tilde{\l}_\ell(v) = 1+\langle v, w_{\rho,\ell}\rangle + \frac{\norm{v}^2 \norm{w_{\rho,\ell}}^2}{4}.
    \end{align*}
    \end{lem}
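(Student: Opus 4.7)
The proof reduces to an explicit matrix computation in $G = \mrm{SO}(d+1,1)^0$. The first step is to isolate the contribution of the weak-stable factor from the purely opposite-horospherical interaction. Writing $p^-_{\rho,\ell} = n^-_{\rho,\ell}\, m_{\rho,\ell}\, g_{t_{\rho,\ell}}$ and decomposing $n\cdot n^-_{\rho,\ell} = p\cdot \phi_0(n)$ with $p\in P^-$ and $\phi_0(n)\in N^+$, the fact that $MA$ normalizes $N^+$ allows one to slide $m_{\rho,\ell} g_{t_{\rho,\ell}}$ past $\phi_0(n)$, yielding
\[
\phi_\ell(n) = (m_{\rho,\ell} g_{t_{\rho,\ell}})^{-1}\, \phi_0(n)\, m_{\rho,\ell} g_{t_{\rho,\ell}}.
\]
Applying $\log$ and using $\Ad(g_{-t})|_{\mf{n}^+} = e^{-t}\,\mrm{id}$ together with the fixed identification $\mf{n}^+ \cong \R^d$ on which $M$ acts through $\mrm{SO}_d(\R)$, this gives
\[
\log \phi_\ell(n) = e^{-t_{\rho,\ell}}\, m_{\rho,\ell}^{-1} \cdot \log \phi_0(n),
\]
so the task reduces to computing $\log \phi_0(n)$ for $n = \exp(v)$ and $w = w_{\rho,\ell}$.

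For this remaining computation I would pass to the standard linear model of $G$ as the identity component of the group preserving the quadratic form $Q(x) = 2 x_0 x_{d+1} + \sum_{i=1}^d x_i^2$ on $\R^{d+2}$. In this model, $N^+$ pointwise fixes the isotropic vector $e_0$, $P^-$ stabilizes the line $\R e_{d+1}$, and both $\exp(v)$ and $\exp(w)$ are explicit $(d+2)\times(d+2)$ matrices whose entries are polynomial of degree at most two in $v$ and $w$ respectively. The element $v_0 = \log \phi_0(n)$ is characterized by the condition that $\exp(v)\exp(w)\exp(-v_0)$ lies in $P^-$, equivalently that the first $d+1$ coordinates of the vector $\exp(v)\exp(w)\exp(-v_0)\, e_{d+1}$ vanish. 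Plugging the ansatz $v_0 = \tilde{\l}(v)^{-1}\bigl(v + \tfrac{\norm{v}^2}{2}\, w\bigr)$ into these equations, and using the identity
\[
\norm{v + \tfrac{\norm{v}^2}{2}\, w}^2 \;=\; \norm{v}^2\, \tilde{\l}(v),
\]
which follows by direct expansion from $\tilde{\l}(v) = 1 + \langle v, w\rangle + \tfrac{1}{4}\norm{v}^2 \norm{w}^2$, verifies the formula.

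The entire argument is essentially bookkeeping once the matrix model is written down. The only genuine content is the quadratic identity displayed above, which explains why the denominator $\tilde{\l}(v)$ has exactly this form. I expect no obstacle beyond keeping careful track of the matrix entries and the conjugation action of $MA$ on $\mf{n}^+$.
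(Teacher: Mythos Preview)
Your proposal is correct and follows essentially the same route as the paper: both pass to the explicit matrix model of $\mrm{SO}(d+1,1)$ preserving $2x_0x_{d+1}\pm\sum x_i^2$ and read off the $N^+$-component of $n^+(v)n^-(w)$ from the resulting product. The only differences are organizational---you first strip off the $MA$ factor via conjugation and then verify the core formula by an ansatz checked against the last column (using the identity $\norm{v+\tfrac{\norm{v}^2}{2}w}^2=\norm{v}^2\tilde\l(v)$), whereas the paper multiplies everything at once and reads $\phi(x)$ directly from the first row of $n^+(x)n^-(y)g_\tau m$; these are dual versions of the same linear-algebra bookkeeping.
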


This lemma implies the following linearization estimate.
\begin{cor}\label{cor:linearize phase}
    
    With the same notation as in Lemma~\ref{lem:phi_ell formula}, we have for all $n=\exp(v)\in W_\rho$ that
    \begin{align*}
        \chi_{\rho,\ell}(t,n) = \a_{\rho,\ell}(t-t_{\rho,\ell},n) + O(e^{-t}),
    \end{align*}
    where
    \begin{align}\label{eq:alpha_rho,ell}
        \a_{\rho,\ell}(t,n):= 
         \exp\left (i \l_\ell(v) \langle \xi_t, m_{\rho,\ell}\cdot v\rangle \right), \qquad
         \l_\ell(v) = 1+\langle v,w_{\rho,\ell}\rangle.
    \end{align}
\end{cor}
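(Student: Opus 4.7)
The plan is to prove the corollary by a direct Taylor expansion of the phase $\chi_{\rho,\ell}(t,n)$, exploiting the explicit formula for the holonomy map $\phi_\ell$ supplied by Lemma~\ref{lem:phi_ell formula}. I would begin by unfolding
\[
\chi_{\rho,\ell}(t,n) \;=\; \exp\bigl(i\langle \xi_t,\, \log(\phi_\ell^{-1}(n)\, n_{\rho,\ell})\rangle\bigr),
\]
and, using that $N^+$ is abelian in real hyperbolic geometry, reduce the task to computing $\tilde v := \log\phi_\ell^{-1}(n)$ as a function of $v = \log n$. Setting $a := e^{t_{\rho,\ell}}$, $m := m_{\rho,\ell}$, and $w := w_{\rho,\ell}$, the formula from Lemma~\ref{lem:phi_ell formula} applied to $\phi_\ell^{-1}(n)$ produces the implicit relation
\[
a\, \tilde\l_\ell(\tilde v)\, m\, v \;=\; \tilde v + \tfrac{\norm{\tilde v}^2}{2}\, w,
\]
which I would invert by successive approximation, starting from the leading solution $\tilde v = a\, m\, v$ and iterating once more to extract the quadratic-in-$v$ correction.

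Pairing the expansion of $\tilde v$ with $\xi_t$, the leading contribution $a\langle\xi_t, m v\rangle$ equals $\langle\xi_{t-t_{\rho,\ell}}, m_{\rho,\ell}\, v\rangle$ via $\xi_{t-t_{\rho,\ell}} = e^{t_{\rho,\ell}}\xi_t$, reproducing the linear part of the phase of $\a_{\rho,\ell}(t-t_{\rho,\ell}, n)$. The quadratic-in-$v$ terms in the expansion, once collected and combined with the approximations $m = \id + O(\iota_\xi)$ and $a = 1 + O(\iota_\xi)$ — valid since $m_{\rho,\ell}\in M_\rho$ and $t_{\rho,\ell}\in(-\iota_\xi,\iota_\xi)$ — should match the correction $\langle v, w_{\rho,\ell}\rangle\langle\xi_{t-t_{\rho,\ell}}, m_{\rho,\ell}\, v\rangle$ dictated by the prefactor $\l_\ell(v) = 1 + \langle v, w_{\rho,\ell}\rangle$ of $\a_{\rho,\ell}$, modulo errors of order $\norm{\xi_t}\norm{v}^2\iota_\xi$. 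The $v$-independent phase $\exp(i\langle\xi_t, \log n_{\rho,\ell}\rangle)$ generated by the $n_{\rho,\ell}$ factor does not depend on $v$ and is to be absorbed into the data accompanying $\a_{\rho,\ell}$.

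The main obstacle is verifying that all remaining discrepancies fit within the $O(e^{-t})$ bound, and this is where the choice $\iota_\xi = \iota/\norm{\xi}^{1/3}$ is critical: cubic-and-higher Taylor remainders pair with $\xi_t$ to contribute at most $\norm{\xi_t}\norm{v}^3 \lesssim e^{-t}\norm{\xi}\iota_\xi^3 = e^{-t}\iota^3$, while the quadratic discrepancies produced by the $m = \id$ and $a = 1$ approximations, each carrying an extra factor of $\iota_\xi$, yield $\norm{\xi_t}\norm{v}^2\iota_\xi \lesssim e^{-t}\norm{\xi}\iota_\xi^3 = e^{-t}\iota^3$ as well. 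Passing from the resulting pointwise phase estimate $|A - B| = O(e^{-t})$ to the additive bound $|\chi_{\rho,\ell}(t,n) - \a_{\rho,\ell}(t-t_{\rho,\ell},n)| = O(e^{-t})$ is then immediate from $|e^{iA} - e^{iB}| \leq |A - B|$.
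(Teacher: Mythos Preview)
Your approach is correct and matches the paper's: both rearrange the explicit holonomy formula from Lemma~\ref{lem:phi_ell formula} to express $\tilde v = \log\phi_\ell^{-1}(n)$ in terms of $v$, pair with $\xi_t$, and use $\iota_\xi^3\norm{\xi} = O(1)$ to absorb all cubic-size errors into $O(e^{-t})$; the paper does this by directly rewriting the implicit relation as $\tilde v = e^{t_{\rho,\ell}}\tilde\l_\ell(\tilde v)\, m_{\rho,\ell} v - \tfrac{\norm{\tilde v}^2}{2}w_{\rho,\ell}$ rather than iterating, but the estimates are identical. You are also right to flag the $v$-independent phase $e^{i\langle\xi_t,\log n_{\rho,\ell}\rangle}$, which the paper's short proof does not address explicitly---it is indeed a unimodular constant in $n$ that disappears once absolute values are taken term-by-term in~\eqref{eq:from A_i to N_1}.
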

\begin{proof}
    Indeed, Lemma~\ref{lem:phi_ell formula} implies that 
    \begin{align*}
        v = \frac{1}{ e^{t_{\rho,\ell}}
        \tilde{\l}_\ell(v)} m^{-1}_{\rho,\ell} \cdot 
        \left(\phi_\ell^{-1}(v) + \frac{\norm{\phi_\ell^{-1}(v)}^2}{2}w_{\rho,\ell}\right),
    \end{align*}
    where we use $\phi_\ell^{-1}(v)$ to denote $\log \phi_\ell^{-1}(\exp(v))$ for simplicity.
    Since $W_\rho$ and the flow boxes $B_\rho$ have radii $\asymp \iota_\xi$, we have that $\phi_\ell^{-1}(v)$ and $w_{\rho,\ell}$ both have size $\ll \iota_\xi$.
    Similarly, we see that $\tilde{\l}_\ell(v)=\l_\ell(v)+O(e^{-t})$.
    Remembering that $\xi_t = e^{-t}\xi$ so that $e^{t_{\rho,\ell}}\xi_t = \xi_{t-t_{\rho,\ell}}$, the corollary follows in view of~\eqref{eq:iota_xi} providing the size of $\iota_\xi$.
\end{proof}

Let us summarize our progress so far.
In light of~\eqref{eq:initial partition},~\eqref{eq:localize space},~\eqref{eq:center integrals on transversal},~\eqref{eq:stable hol}, and Corollary~\ref{cor:linearize phase}, we find that
\begin{align*}
    &\int_{N_1^+} e^{ i \langle \xi, n\rangle} \psi(n)\;d\mu^u_x
    \nonumber\\
    &=  \sum_{\rho\in \Pcal_\xi^0}
    \int_\R q(t) 
    e^{-\d t} \int_{W_\rho}
    \sum_{\ell\in I_{\rho,t}} 
         \a_{\rho,\ell}(t-t_{\rho,\ell},n)
         \overline{\widetilde{\psi}}_{\rho,\ell}(t,n)
    \rho_\ell(n)    \;d\mu_{y_{\rho}}^u dt
    + O\left(\int_\R q(t) e^{-t}\;dt\right).
\end{align*}
Recall that $q(t)$ was supported in the interval $(T-1,T)$ and satisfies~\eqref{eq:q(t)}.
Hence, changing variables in $t\mapsto t+t_{\rho,\ell}$ and letting
\begin{align}\label{eq:psi_rho,ell}
    \psi_{\rho,\ell}(t,n) := e^{-\d t_{\rho,\ell}} 
    \times q(t+t_{\rho,\ell}) \times
    \overline{\widetilde{\psi}}_{\rho,\ell}(t+t_{\rho,\ell},n),
\end{align}
we obtain
\begin{align}\label{eq:pre-Cauchy-Shwarz}
    \int_{N_1^+} e^{ i \langle \xi, n\rangle} & \psi(n)\;d\mu^u_x
    \nonumber\\
    &\ll e^{-\d T}  \sum_{\rho\in \Pcal_\xi^0}
    \left| 
    \int_{\R\times W_\rho}
    \sum_{\ell\in I_{\rho,t}} 
         \a_{\rho,\ell}(t,n)
         \psi_{\rho,\ell}(t,n)
    \rho_\ell(n)    \;d\mu_{y_{\rho}}^u  dt\right|
    + O\left( e^{-T}\right).
\end{align}

\subsection*{Cauchy-Schwarz}
\label{sec:CauchySchwarz}
In light of~\eqref{eq:pre-Cauchy-Shwarz}, we are left with estimating integrals of the form:
\begin{align}\label{eq:pre-CauchySchwarz}
       \int_{\R\times W_\rho} \Psi_{\rho}(t,n)
      \;d\mu^u_{y_\rho}dt ,
    \qquad
    \Psi_{\rho}(t,n):=\sum_{\ell\in I_{\rho,t}} \a_{\rho,\ell}(t,n)
         \psi_{\rho,\ell}(t,n) \rho_\ell(n). 
\end{align}
We begin by giving an apriori bound on $\Psi_\rho$.
Denote by $J_\rho\subset\R$ the bounded support of the integrand in $t$ coordinate of the above integrals.
One then checks that
 \begin{align}\label{eq:bound psi without tilde}
     \norm{\psi_{\rho,\ell}}_{L^\infty(J_\rho\times W_\rho)}
     \ll  1, \qquad \norm{\Psi_\rho}_{L^\infty(J_\rho\times W_\rho)} 
     \ll \# I_{\rho,T}.
 \end{align}

By Cauchy-Schwarz, we get
\begin{align*}
    \left|\int_{J_\rho\times W_\rho}\Psi_\rho(t,n)\;d\mu^u_{y_\rho}dt\right|^2
    \leq |J_\rho| \mu^u_{y_\rho}(W_\rho) \int_{J_\rho\times W_\rho}\left|\Psi_\rho(t,n)\right|^2\;d\mu^u_{y_\rho}dt
\end{align*}
We frequently use the following bounds
\begin{align*}
    |J_\rho| \ll 1, \qquad \mu^u_{y_\rho}(W_\rho) \ll_\G 1,
\end{align*}
where the latter follows from convex cocompactness of $\G$ and the fact that $y_\rho \in N^-_{1/2}\Omega$; cf.~discussion above~\eqref{eq:box notation}.

\subsection*{Linearizing the amplitude}
Fix some $t\in J_\rho$.
Let $r>0$ to be chosen a small negative power of $\norm{\xi}$. Using~\cite[Proposition 9.9]{Khalil-Mixing}, we can find a cover $\set{A_j}$ of $W_\rho$ with balls of radius $r\iota_\xi$ centered around $u_j\in W_\rho\cap \supp(\mu^u_{y_\rho})$ and satisfying $\sum_j\mu^u_{y_\rho}(A_j)\ll \mu_{y_\rho}^u(W_\rho)$.
In particular, by the triangle inequality we have
\begin{align}\label{eq:partition}
\int_{W_\rho}\left|\Psi_\rho(t,n)\right|^2\;d\mu^u_{y_\rho}(n)
\leq \sum_j \int_{A_j}\left|\Psi_\rho(t,n)\right|^2\;d\mu^u_{y_\rho}(n).
\end{align}

We now turn to estimating the sum of oscillatory integrals in~\eqref{eq:partition}.
For $k,\ell\in I_{\rho,t}$, we let
\begin{equation*}
    \psi_{k,\ell}(t,n) := \psi_{\rho,k}(t,n)\rho_k(n)
    \overline{\psi_{\rho,\ell}(t,n) \rho_\ell(n)}, 
    \qquad
    \a_{k,\ell}(t,n) := \a_{\rho,k}(t,n) \overline{\a_{\rho,\ell}(t,n)}.
\end{equation*}
Expanding the square, we get
\begin{align*}
    \sum_j \int_{ A_j} |\Psi_\rho(t,n)|^2\;d\mu_{y_\rho}^u
    = 
     \sum_j \sum_{k,\ell\in I_{\rho,t}}
     \int_{A_j} \a_{k,\ell}(t,n) \psi_{k,\ell}(t,n)  \;d\mu^u_{y_{\rho}} .
\end{align*}

Using~\eqref{eq:g_t equivariance} and~\eqref{eq:N equivariance}, we change variables in the integrals using the maps taking each $A_j$ onto $N_{1}^+$.
More precisely, recall that $A_j$ is a ball of radius $r\iota_\xi$ around $u_j$ such that $u_j \yrho \in \Omega$.
Letting  
\begin{align}\label{eq:after moving to cpt}
    \t = -\log r\iota_\xi, \qquad
    \yrho^j &= g_{\t}u_j \yrho, 
    \qquad
    \a^j_{k,\ell}(t,n) = \a_{k,\ell}(t,\Ad(g_{-\t})(n)u_j), 
    \nonumber\\
    \psi^j_{k,\ell}(t,n) &= \psi_{k,\ell} (t,\Ad(g_{-\t})(n)u_j),
\end{align}
we can rewrite the above sum as
\begin{align}\label{eq:from A_i to N_1}
    \sum_j \sum_{k,\ell\in I_{\rho,t}}
     \int_{A_j} \a_{k,\ell}(t,n) \psi_{k,\ell}(t,n)  \;d\mu^u_{y_{\rho}}
     \leq e^{- \d \t} \sum_{j} 
    \sum_{k,\ell\in I_{\rho,t}}
       \left| \int_{N_{1}^+} 
       \a^j_{k,\ell}(t,n) \psi^j_{k,\ell}(t,n) 
       d\mu^u_{y^j_\rho} 
       \right|.
\end{align}

One advantage of flowing forward by $g_\t$ is that it provides smoothing of the amplitude functions $\psi_{k,\ell}$.
In particular, it follows by~\eqref{eq:norm of rho} that
\begin{align*}
    \norm{\psi_{k,\ell}^j}_{C^1} \ll \norm{\psi}_{C^1} e^{-\t} \iota_\xi^{-1} = O(\norm{\psi}_{C^1} r).
\end{align*}
Applied to the right side of~\eqref{eq:from A_i to N_1}, we obtain
\begin{align}\label{eq:remove amp}
    \int_{W_\rho}\left|\Psi_\rho(t,n)\right|^2\;d\mu^u_{y_\rho}
    =e^{-\d\t}\sum_j \sum_{k,\ell\in I_{\rho,t}}
       \left| \int_{N_{1}^+} 
       \a^j_{k,\ell}(t,n)
       d\mu^u_{y^j_\rho} 
       \right|
       + O(\norm{\psi}_{C^1} r \# I_{\rho,t}^2).
\end{align}

\subsection*{Separation of frequencies}
Recall that $u_j$ denotes the center of the ball $A_j$ for each $j$ and let $v_j\in \mf{n}^+$ be such that
\begin{align*}
    u_j = \exp(v_j).
\end{align*}
Then, given $n=\exp(v)\in N_{1}^+$, we observe using~\eqref{eq:alpha_rho,ell} that
\begin{align*}
    \a_{\rho,k}(t,\Ad(g_{-\t})(n)u_j) = 
    \exp \left( i
     \l_\ell(v_j) 
    \langle \xi_t, m_{\rho,\ell}\cdot(v_j+e^{-\t}v) \rangle 
    \right)
    + O(e^{-\t}\norm{\xi_t} \norm{e^{-\t}v+v_j}).
\end{align*}
We note that $\exp(e^{-\t}v+v_j)$ belongs to the ball $A_j$ (an element of the cover of $W_\rho$).
Since $A_j$ meets the ball $W_\rho$, we have that 
$\norm{e^{-\t}v+ v_j} \ll \iota_\xi(1+r) \ll \norm{\xi}^{2/3}$.

Letting $\xi_{t+\t} = e^{-t-\t}\xi$ and
\begin{align}\label{eq:betas}
    \b^j_{k,\ell} := 
    \xi_{t+\t} \cdot\left( \l_k(v_j)  m_{\rho,k} - \l_\ell(v_j)  m_{\rho,\ell}\right),
\end{align}
it follows that
\begin{align}\label{eq:further linearizing}
    \left| \int_{N_{1}^+} 
       \a^j_{k,\ell}(t,n)
       d\mu^u_{y^j_\rho} 
       \right|
       \leq \left|\int_{N_1^+} \exp\left( i
    \langle  \b^j_{k,\ell},n \rangle\right) d\mu^u_{y^j_\rho}\right|   
    + O\left(e^{-t-\t} \norm{\xi}^{2/3}  \right),
\end{align}
where we absorbed the constant term of the phase into the absolute values.

To apply the flattening theorem, it will be important to understand the distribution of the frequencies $\b^j_{k,\ell}$.
To this end, we have the following lemma.
\begin{lem}\label{lem:lamda and freq}
   For all $j,k,\ell$, we have
   \begin{align*}
       \norm{\b^j_{k,\ell}} \gg \norm{\xi_{t+\t}}
       | \langle v_j,w_{\rho,k} - w_{\rho,\ell}\rangle|,
   \end{align*}
   where $w_{\rho,k}$ and $w_{\rho,\ell}$ are the vectors corresponding to the transverse intersection points defined in Lemma~\ref{lem:phi_ell formula}.
\end{lem}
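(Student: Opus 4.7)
The plan is a direct algebraic bound whose proof I would carry out by expanding $\norm{\b^j_{k,\ell}}^2$ and exploiting that the $m$-factors act as Euclidean isometries. The key structural input is that the inner product on $\mf{n}^+\cong \R^d$ fixed at the start of the section was chosen to be $\Ad(M)$-invariant, so $\norm{m_{\rho,k}\eta} = \norm{m_{\rho,\ell}\eta} = \norm{\eta}$ for every $\eta \in \R^d$.

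First, I would write out $\b^j_{k,\ell}$ (up to transposing the matrix, depending on convention) as $(\l_k(v_j)m_{\rho,k} - \l_\ell(v_j)m_{\rho,\ell})\xi_{t+\t}$ and expand the square:
\begin{align*}
    \norm{\b^j_{k,\ell}}^2 = \bigl(\l_k(v_j)^2 + \l_\ell(v_j)^2\bigr)\norm{\xi_{t+\t}}^2 - 2\l_k(v_j)\l_\ell(v_j) \langle m_{\rho,k}\xi_{t+\t}, m_{\rho,\ell}\xi_{t+\t}\rangle,
\end{align*}
using the orthogonality to simplify the two diagonal terms. Next I would apply Cauchy--Schwarz to the cross term to bound $\langle m_{\rho,k}\xi_{t+\t}, m_{\rho,\ell}\xi_{t+\t}\rangle \leq \norm{\xi_{t+\t}}^2$, which allows one to complete the square provided $\l_k(v_j)\l_\ell(v_j) > 0$.

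To verify the positivity, I would note that $\l_\ell(v) = 1 + \langle v, w_{\rho,\ell}\rangle$, while $v_j\in W_\rho$ and $w_{\rho,\ell}$ both have norm $\ll \iota_\xi \ll 1$ (being inside the flow boxes of radius $\iota_\xi$), so $\l_k(v_j), \l_\ell(v_j) = 1 + O(\iota_\xi^2) > 0$. Completing the square then gives $\norm{\b^j_{k,\ell}}^2 \geq (\l_k(v_j) - \l_\ell(v_j))^2 \norm{\xi_{t+\t}}^2$. Since by the explicit formula for $\l_\ell$ we have $\l_k(v_j) - \l_\ell(v_j) = \langle v_j, w_{\rho,k} - w_{\rho,\ell}\rangle$, taking square roots yields the claim.

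There is no real obstacle here: the lemma amounts to the elementary observation that for positive scalars $\a,\b$ and orthogonal matrices $A,B$, the operator $\a A - \b B$ has spectral norm at least $|\a - \b|$. The only things to check carefully are the convention governing how $\xi_{t+\t}$ is acted upon in the definition of $\b^j_{k,\ell}$ and the sign condition on the $\l_\cdot(v_j)$, both of which follow immediately from what was set up above.
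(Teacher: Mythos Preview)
Your proof is correct and in fact slightly more elementary than the paper's. Both arguments reduce to the same underlying inequality, namely that for positive scalars $c_k,c_\ell$ and orthogonal matrices $m_k,m_\ell$ one has $\norm{(c_k m_k - c_\ell m_\ell)\eta} \geq |c_k - c_\ell|\,\norm{\eta}$, but they reach it differently. The paper argues via operator norms: it writes $A_k - A_\ell = c_k m_k(\id - \tfrac{c_\ell}{c_k} m_\ell m_k^{-1})$, observes that $\norm{\tfrac{c_\ell}{c_k} m_\ell m_k^{-1}} = c_\ell/c_k < 1$ (assuming WLOG $c_k > c_\ell$), and then bounds $\norm{(\id - Q)^{-1}}$ by the Neumann series $\sum_n \norm{Q}^n = c_k/(c_k - c_\ell)$, invoking the elementary estimate $\norm{g\cdot v} \geq \norm{v}/\norm{g^{-1}}$. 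You instead expand $\norm{\b^j_{k,\ell}}^2$ directly, use $M$-invariance of the norm on the diagonal terms, bound the cross term by Cauchy--Schwarz, and complete the square. Your route is shorter and avoids invertibility of $A_k - A_\ell$ altogether; the paper's route has the minor advantage of making the role of the operator $A_k - A_\ell$ and its inverse explicit, which connects more visibly to the holonomy picture. Either way, the identification $\l_k(v_j) - \l_\ell(v_j) = \langle v_j, w_{\rho,k} - w_{\rho,\ell}\rangle$ and the positivity check $\l_\ell(v_j) = 1 + O(\iota_\xi^2) > 0$ are the same in both.
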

\begin{proof}

    In light of Corollary~\ref{cor:linearize phase}, it suffices to prove that
    \begin{align*}
         \norm{\b^j_{k,\ell}} \gg \norm{\xi_{t+\t}}
       | \l_k( v_j)-\l_\ell(v_j)\rangle|.
    \end{align*}
    The estimate is evident when $\l_k(v_j)=\l_\ell(x_j)$.
    Hence, we may assume without loss of generality that $\l_k(v_j)>\l_\ell(v_j)$, and recall that these functions are non-negative by definition; cf.~\eqref{eq:alpha_rho,ell}.
    We may also assume that our norm is invariant by $\mrm{O}_d(\R)$.
    In what follows, let
    \begin{align*}
        m_k:= m_{\rho,k},\qquad  c_k:=\l_k(v_j),
        \qquad A_k:= c_k m_{k}    
    \end{align*}
     to simplify notation with the similar notation for the index $\ell$ in place of $k$ defined analogously.

    Recall the elementary estimate $\norm{g\cdot v} \geq \norm{v}/\norm{g^{-1}}$ for any invertible linear map $g$ and any vector $v\in \R^d$. This estimate implies the following lower bound for $\norm{\b^j_{k,\ell}}$:
    \begin{align*}
        \norm{\b^j_{k,\ell}} \geq 
        \frac{\norm{\xi_{t+\t}}}{\norm{(A_k-A_\ell)^{-1}}} = \frac{\norm{\xi_{t+\t}} c_k}{ \norm{(\id - \frac{c_\ell}{c_k}m_{\ell} m_{k}^{-1})^{-1}}}.
    \end{align*}
    
    That $\id - \frac{c_\ell}{c_k}m_\ell m_k^{-1}$ (and hence $A_k-A_\ell$) is invertible follows at once from the following estimate on the norm of its inverse.
    Using the power series expansion of $\id - Q$, for matrices $Q$ with $\norm{Q}<1$, we see that
    \begin{align*}
        \norm{(\id - \frac{c_\ell}{c_k}m_\ell m_k^{-1})^{-1}}
        \leq \sum_{n\geq 0} \left(\frac{c_\ell}{c_k}\right)^n 
        = \frac{c_k}{c_k-c_\ell}.
    \end{align*}
    The lemma follows by combining the above two estimates.\qedhere
\end{proof}

This lemma motivates the definition of the following subset of $I_{\rho,t}^2$ parametrizing pairs $(k,\ell)$ for which the vectors $\b^j_{k,\ell}$ are too small. Namely, we set
\begin{align}\label{eq:close freqs}
    C^j_{\rho,t}
    = \set{(k,\ell)\in I_{\rho,t}^2 : | \langle v_j,w_{\rho,k} - w_{\rho,\ell}\rangle| < \norm{\xi_{t+\t}}^{-1/10}}.
\end{align}
Roughly speaking, elements of $C^j_{\rho,t}$ correspond to points that concentrate near affine subspaces orthogonal to $v_j$.

The following proposition allows us to trivially estimate over the pairs in $C^j_{\rho,t}$ by showing that the points $y_{\rho,\ell}$ do not typically concentrate near proper affine linear subspaces. 

\begin{prop}
\label{prop:count close freqs}
     There exists $\eta>0$, depending only on the Patterson-Sullivan measure of $\G$, such that for all $k\in I_{\rho,t}$, we have
    \begin{align*}
        \#\set{\ell\in I_{\rho,t}  : (k,\ell)\in C^j_{\rho,t}} \ll 1+ \norm{\xi_{t+\t}}^{- \eta/10} \norm{v_j}^{-\eta} e^{\d t}.
    \end{align*}
\end{prop}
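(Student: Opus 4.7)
The plan is to convert the enumeration into an estimate on a Patterson-Sullivan measure transverse to the unstable foliation, and then invoke affine non-concentration. The condition $(k,\ell) \in C^j_{\rho,t}$ is equivalent to requiring that the stable coordinate $w_{\rho,\ell}$ lies in the affine slab
\[
    S_k := \set{w \in \mf{n}^- : |\langle v_j, w - w_{\rho,k}\rangle| < \norm{\xi_{t+\t}}^{-1/10}},
\]
which is a neighborhood of transverse width $\norm{\xi_{t+\t}}^{-1/10}/\norm{v_j}$ of the affine hyperplane orthogonal to $v_j$ through $w_{\rho,k}$. The task thus reduces to bounding the number of return points $x_{\rho,\ell} \in T_\rho$ whose $N^-$-coordinates fall in $S_k$.

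The bridge from counting to measure proceeds through the expanding dynamics. Each index $\ell$ corresponds to a connected component $W \in \Wcal_{\rho,t}$ whose pullback via $\Ad(g_{-t})$ to $N_1^+$ has size $\asymp e^{-t}\iota_\xi$. Using the equivariance $\mu^u_{g_t x} = e^{\d t}\Ad(g_t)_*\mu^u_x$ together with the Frostman lower bound $\mu^u_{x_{\rho,\ell}}(B_{N^+}(e,\iota_\xi)) \gg_\G \iota_\xi^{\d}$ (uniform for $x_{\rho,\ell} \in \Omega$ by convex cocompactness), every $\ell$ contributes a $\mu^u_x$-mass of order $\iota_\xi^\d e^{-\d t}$. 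Hence up to constants depending only on $\G$, the count of $\ell$ with $w_{\rho,\ell} \in S_k$ is at most $\iota_\xi^{-\d}e^{\d t}$ times the mass assigned to a neighborhood of $S_k$ by the natural stable-direction PS measure on $T_\rho$ (obtained as the transverse conditional of the BMS measure from the local product structure on $\Omega$).

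By the symmetry of the Patterson-Sullivan construction under the time reversal $g_t \mapsto g_{-t}$, Proposition~\ref{prop:aff non-conc of PS} applies equally to these stable-direction conditional measures, with the same H\"older-type non-concentration rate $\d(\e) \ll \e^\eta$ for some $\eta > 0$ depending only on $\G$. Applied to the slab $S_k$ (of width $\norm{\xi_{t+\t}}^{-1/10}/\norm{v_j}$ inside a $\iota_\xi$-ball in $N_\rho^-$), this yields a mass bound $\ll (\norm{\xi_{t+\t}}^{-1/10}/\norm{v_j})^\eta \cdot \iota_\xi^\d$, and combined with the factor above produces the claimed estimate; the additive ``$1$'' absorbs the degenerate case in which $\norm{v_j}$ is so small that the slab engulfs the support, rendering non-concentration vacuous. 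The primary technical obstacle is in extracting the polynomial rate $\d(\e) \ll \e^\eta$ (Definition~\ref{def:non-conc} only posits $\d(\e) \to 0$); for PS measures on convex cocompact limit sets this strengthening follows from their Ahlfors $\d$-regularity, but the implicit constants must be tracked uniformly across base-points $y_\rho \in N_{1/2}^-\Omega$ and flow boxes $B_\rho$, which requires invoking the uniformity of injectivity radii and PS densities on a neighborhood of $\Omega$.
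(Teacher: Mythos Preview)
Your overall strategy matches the paper's: recognize that $(k,\ell)\in C^j_{\rho,t}$ forces $w_{\rho,\ell}$ into an affine slab of width $\norm{\xi_{t+\t}}^{-1/10}/\norm{v_j}$, bound the stable PS mass of that slab via polynomial affine non-concentration, and convert to a count. The gap is in the conversion step. You show each $\ell$ carries $\mu^u_x$-mass $\gg \iota_\xi^\d e^{-\d t}$ along the \emph{unstable} leaf $N_1^+ x$, and then assert this bounds $\#\{\ell: w_{\rho,\ell}\in S_k\}$ by $\iota_\xi^{-\d}e^{\d t}$ times the \emph{stable} PS mass of $S_k$. This ``hence'' does not follow: your unstable mass pieces control only the total $\#I_{\rho,t}$ and say nothing about which transverse coordinates $w_{\rho,\ell}\in\mf{n}^-$ land in the slab $S_k\subset\mf{n}^-$; the BMS product structure does not convert unstable plaque mass into stable slab mass, since your plaques are null sets for BMS and sweep out no stable neighborhood of $w_{\rho,\ell}$. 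What is actually needed is a disjointness/separation statement for the points $w_{\rho,\ell}$ themselves in the stable direction.

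The paper supplies exactly that: it records (citing \cite[Proof of Proposition~9.13]{Khalil-Mixing}) that the $w_{\rho,\ell}$ are pairwise $\gg e^{-t}$-separated in $\mf{n}^-$, so the disjoint $e^{-t}$-balls about those $w_{\rho,\ell}$ lying in $S_k$ each carry stable PS mass $\asymp e^{-\d t}$, giving the count $\ll e^{\d t}$ times the slab mass; polynomial non-concentration (taken directly from \cite[Theorem~11.7]{Khalil-Mixing}, which already furnishes the rate $\e^\eta$) then finishes. Your argument becomes correct once this separation is established. As a secondary point, Ahlfors $\d$-regularity alone does not yield the polynomial rate $\d(\e)\ll\e^\eta$ when $\d_\G\leq d-1$ (the support could then lie in a hyperplane), so that step too should appeal to the cited non-concentration theorem rather than to regularity.
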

\begin{proof}
    The deduction of the above estimate from~\cite[Theorem 11.7]{Khalil-Mixing} is similar to the proof of~\cite[Proposition 9.13]{Khalil-Mixing}, but we include a sketch of the proof for completeness.
    For a given $k\in I_{\rho,t}$, the condition $(k,\ell)\in C^j_{\rho,t}$ implies that $w_{\rho,\ell}$ is contained in the $\norm{\xi_{t+\t}}^{-1/10}/\norm{v_j}$-neighborhood of the affine subspace $v_j^\perp + w_{\rho,k}$, where $v_j^\perp$ is the orthocomplement of $v_j$ with respect to our inner product.
    Moreover, the points $w_{\rho,\ell}$ are separated by an amount $\gg e^{-t}$; cf.~\cite[Proof of Proposition 9.13]{Khalil-Mixing}.  
    Finally, by~\cite[Theorem 11.7]{Khalil-Mixing} implies that any $\e$-neighborhood of a proper affine subspace has measure $O(\e^\eta)$.
    In particular, such neighborhood contains at most $O(\e^\eta e^{\d t})$, since each ball of radius $e^{-t}$ with center in $\Omega$ has measure $\asymp e^{-\d t}$.
\end{proof}

To apply Proposition~\ref{prop:count close freqs}, we first estimate trivially on the terms where $\norm{v_j}< \norm{\xi_{t+\t}}^{-1/20}$.
We note that the union
\begin{align*}
    \bigcup_{j:\norm{v_j}< \norm{\xi_{t+\t}}^{-1/20}} A_j 
\end{align*}
is contained in a ball of radius $\asymp \norm{\xi_{t+\t}}^{-1/20}$. 
It follows that 
\begin{align*}
    e^{-\d\t}\sum_{j:\norm{v_j}< \norm{\xi_{t+\t}}^{-1/20}}\mu^u_{y^j_\rho}(A_j)  \ll \norm{\xi_{t+\t}}^{-\d/20}.
\end{align*}
Let $S_{\rho,t}^j = I^2_{\rho,t}\setminus C^j_{\rho,t}$.
Combined with~\eqref{eq:remove amp} and~\eqref{eq:further linearizing}, we obtain
\begin{align}\label{eq:before add comb}
     &\int_{W_\rho}\left|\Psi_\rho(t,n)\right|^2\;d\mu^u_{y_\rho}
     \nonumber\\
    &\ll 
    e^{-\d\t}\sum_{j:\norm{v_j}\geq \norm{\xi_{t+\t}}^{-1/20}} \sum_{(k,\ell)\in S^j_{\rho,t}}
       \left|\int_{N_1^+} \exp\left( i
    \langle  \b^j_{k,\ell},n \rangle\right) d\mu^u_{y^j_\rho}\right|   
    \nonumber\\
       &+ O \left(
       \left(
       \norm{\psi}_{C^1} r + e^{-t-\t} \norm{\xi}^{2/3} 
       + \norm{\xi_{t+\t}}^{-\d/20}
       \right)
       \# I_{\rho,t}^2 + 
       \left( 1+ \norm{\xi_{t+\t}}^{- \eta/20} e^{\d t} \right) \# I_{\rho,t} \right).
\end{align}

\subsection{The role of additive combinatorics}

For each $j$, the sum on the right side of the above estimate can be viewed as an average, when properly normalized, over Fourier coefficients of the measure $\mu^u_{\bullet}$.
Moreover, the frequencies $\b^j_{k,\ell}$ are sampled from a well-separated set.
Hence, this average can be estimated using the $L^2$-Flattening Theorem, Theorem~\ref{thm:flattening}.

To simplify notation, for $w\in \R^d$, we let
\begin{align}\label{eq:nu_j}
    \nu_j :=  \mu^u_{y_\rho^j}\left|_{N_1^+} \right. 
    , \qquad
    \hat{\nu}_j(w):= 
    \int_{N^+} e^{-i \langle w, n\rangle} \;d\nu_j(n).
\end{align}
Note that the total mass of $\nu_j$, denoted $|\nu_j|$, is $\murhoj(N_1^+)$.
Let $\eta_2>0$ be a small parameter to be chosen using Proposition~\ref{prop:apply flattening} below.
Define the following set of frequencies where $\hat{\nu}_j$ is large:
\begin{align}
    B(j,k,\eta_2):= \set{\ell\in I_{\rho,t}: (k,\ell)\in S_{\rho,t}^j \text{ and } |\hat{\nu}_j(\b_{k,\ell}^j)| > \norm{\xi_{t+\t}}^{-\eta_2} |\nu_j|}.
\end{align}
Then, splitting the sum over frequencies according to the size of the Fourier transform $\hat{\nu}_j$ and reversing our change variables to go back to integrating over $A_j$, we obtain
\begin{align}\label{eq:bound by count}
    e^{-\d\t}\sum_{j:\norm{v_j}\geq \norm{\xi_{t+\t}}^{-1/20}} \sum_{(k,\ell)\in S^j_{\rho,t}}
    &
    \int e^{-i \langle \b^j_{k,\ell}, n\rangle}
     \;d\nu_j(n)
    \nonumber\\
    &\ll
     \left( 
     \max_{j,k} \# B(j,  k,\eta_2) 
    + \norm{\xi_{t+\t}}^{-\eta_2} \#I_{\rho,t} 
    \right)  \# I_{\rho,t}\mu^u_{y_\rho}(N_1^+),
 \end{align}

The following key counting estimate for $B(j,k,\eta_2)$ is a consequence of the $L^2$-flattening theorem, Theorem~\ref{thm:flattening}.
\begin{prop}[{cf.~\cite[Proposition 9.16]{Khalil-Mixing}}]
\label{prop:apply flattening}
    For every $\e>0$, there is $\eta_2>0$ such that for all $j$ and $k\in I_{\rho}$, we have
    \begin{align*}
        \# B(j,k,\eta_2) \ll_\e \norm{\xi}^\e \left(1+e^{\d t}\norm{\xi_{t+\t}}^{- \eta_1}\right),
    \end{align*}
    where $\eta_1>0$ is the constant provided by Proposition~\ref{prop:count close freqs}.
\end{prop}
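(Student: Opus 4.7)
The plan is to apply the $L^2$-flattening theorem (Theorem~\ref{thm:flattening}) to the normalized measure $\bar\nu_j := \nu_j/|\nu_j|$, a probability measure that is uniformly affinely non-concentrated with parameters independent of $j$ by Proposition~\ref{prop:aff non-conc of PS}. For a given $\e>0$, Theorem~\ref{thm:flattening} supplies $\eta_2=\eta_2(\e)>0$ such that
\begin{align*}
    \bigl|\bigl\{w\in\R^d:\|w\|\le R,\ |\hat{\bar\nu}_j(w)|> R^{-\eta_2}\bigr\}\bigr|\ll_\e R^{\e/2}
\end{align*}
for all $R\ge 1$. Since the frequencies satisfy $\|\beta^j_{k,\ell}\|\ll \|\xi_{t+\tau}\|$, taking $R\asymp \|\xi_{t+\tau}\|$ places every bad frequency $\beta^j_{k,\ell}$ inside a Lebesgue-measurable set $\mc{E}$ of measure $\ll \|\xi\|^{\e/2}$.

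Next I would convert this Lebesgue bound into a cardinality bound via a packing argument. Because $\supp\nu_j$ has diameter $O(1)$, the function $\hat{\bar\nu}_j$ is $O(1)$-Lipschitz, so each bad frequency is the center of a ball of radius $\asymp R^{-\eta_2}$ on which $|\hat{\bar\nu}_j|\gtrsim R^{-\eta_2}$; these balls are contained (after a harmless enlargement) in $\mc{E}$. Packing shows that the family $\{\beta^j_{k,\ell}\}_\ell$ of bad frequencies can be covered by $\ll \|\xi\|^\e$ balls of radius $R^{-\eta_2}$, provided $\eta_2$ is chosen small enough that the ratio of $|\mc{E}|$ to the ball volume is $\ll \|\xi\|^\e$.

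To finish, I would bound the number of indices $\ell$ producing frequencies in a single ball. By the same argument as in Lemma~\ref{lem:lamda and freq},
\begin{align*}
    \|\beta^j_{k,\ell_1}-\beta^j_{k,\ell_2}\|\gg \|\xi_{t+\tau}\|\cdot|\langle v_j,w_{\rho,\ell_1}-w_{\rho,\ell_2}\rangle|,
\end{align*}
so proximity within $R^{-\eta_2}$ forces $|\langle v_j,w_{\rho,\ell_1}-w_{\rho,\ell_2}\rangle|\ll \|\xi_{t+\tau}\|^{-1-\eta_2}$, well below the threshold $\|\xi_{t+\tau}\|^{-1/10}$ defining $C^j_{\rho,t}$ in~\eqref{eq:close freqs}. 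Proposition~\ref{prop:count close freqs} then yields a fiber bound of $\ll 1+\|\xi_{t+\tau}\|^{-\eta_1}\|v_j\|^{-\eta_1}e^{\d t}$; the factor $\|v_j\|^{-\eta_1}\le \|\xi_{t+\tau}\|^{\eta_1/20}$ (using the restriction $\|v_j\|\ge \|\xi_{t+\tau}\|^{-1/20}$ enforced upstream) is absorbed into $\|\xi\|^\e$ by an appropriate choice of $\e$ relative to $\eta_1$. Multiplying the cluster count by the fiber bound gives the stated estimate.

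The main obstacle is the delicate calibration of $\eta_2$: it must remain in the regime where Theorem~\ref{thm:flattening} applies for the prescribed $\e$, be small enough for the packing step to yield $\ll \|\xi\|^\e$ clusters, yet large enough that the proximity $R^{-\eta_2}\ll \|\xi_{t+\tau}\|^{-1}$ of two frequencies within one ball triggers the hypothesis of Proposition~\ref{prop:count close freqs}. Threading these competing constraints is the principal bookkeeping task, but no new ideas beyond the already-assembled ingredients are required.
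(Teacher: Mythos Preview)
Your approach is essentially the same as the paper's: apply Theorem~\ref{thm:flattening} to the normalized $\nu_j$ to bound the Lebesgue measure of the set where $|\hat\nu_j|$ is large, use the $O(1)$-Lipschitz continuity of $\hat\nu_j$ to convert this into a covering by $O_\e(\norm{\xi}^\e)$ balls, and then invoke Lemma~\ref{lem:lamda and freq} together with Proposition~\ref{prop:count close freqs} to bound the number of indices $\ell$ landing in each ball. The only cosmetic difference is that the paper covers by balls of radius $1$ rather than $R^{-\eta_2}$, which slightly simplifies the bookkeeping; your worry in the last paragraph about a competing \emph{lower} bound on $\eta_2$ is unfounded, since proximity $\ll R^{-\eta_2}$ in frequency space yields $|\langle v_j,w_{\rho,\ell_1}-w_{\rho,\ell_2}\rangle|\ll \norm{\xi_{t+\t}}^{-1-\eta_2}\ll \norm{\xi_{t+\t}}^{-1/10}$ for \emph{any} $\eta_2>0$, so only the upper bound on $\eta_2$ coming from the flattening and packing steps is in play.
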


\begin{proof}
    
    Recall the definition of the frequencies $\b^j_{k,\ell}$ in~\eqref{eq:betas} and the sets $S^j_{\rho,t} = I^2_{\rho,t}\setminus C^j_{\rho,t}$, where $C^j_{\rho,t}$ was defined in~\eqref{eq:close freqs}.
    The rough idea behind the proof is that, by Lemma~\ref{lem:lamda and freq}, $S^j_{\rho,t}$ parametrize pairs of frequencies which are sufficiently separated. This allows us to apply Theorem~\ref{thm:flattening} on the Lebesgue measure of the set of frequencies where the Fourier transform is large to conclude that the sets $B(j,k,\eta_2)$ are relatively small in size.
    
    More precisely, Proposition~\ref{prop:aff non-conc of PS} and Theorem~\ref{thm:flattening} imply that the set $B(j,k,\eta_2)$ can be covered by $O_\e(\norm{\xi}^\e)$ balls of radius $1$, provided $\eta_2$ is small enough, depending only on $\e$.
    By Lemma~\ref{lem:lamda and freq} and Proposition~\ref{prop:count close freqs}, each such ball contains at most $O(1+e^{\d t}\norm{\xi_{t+\t}}^{-\eta_1})$. 
    This concludes the proof.
\end{proof}

By choosing $r^{-1}$ and $e^T$ to be suitably small positive powers of $\norm{\xi}$, the theorem now follows upon combining~\eqref{eq:before add comb},~\eqref{eq:bound by count}, and Proposition~\ref{prop:apply flattening}.


\section{Explicit formula for stable holonomy maps}
\label{sec:temporal function}

In this section, we give explicit formulas for the commutation relations between stable and unstable subgroups which we need for the proof of Lemma~\ref{lem:phi_ell formula}.

\subsection{Proof of Lemma~\ref{lem:phi_ell formula}}

Consider the following quadratic form on $\R^{d+2}$: for $x=(x_i)\in \R^{d+2}$, 
\begin{align*}
    Q(x) = 2x_0x_{d+1}- |x_1|^2 - \cdots - |x_{d}|^2.
\end{align*}
Let $\mrm{SO}_{\R}(Q)\cong \mrm{SO}(d+1,1)$ be the orthogonal group of $Q$; i.e.~the subgroup of $ \mrm{SL}_{d+2}(\R)$
preserving $Q$.
Then, we have a surjective homomorphism  $\mrm{SO}_\R(Q) \to G=\mrm{Isom}^+(\H^{d+1})$ with finite kernel.
The geodesic flow is induced by the diagonal group
$$A=\set{g_t= \mrm{diag}(e^t,\mrm{I}_{d},e^{-t}):t\in\R},$$
where $\mrm{I}_{d}$ denotes the identity matrix in dimension $d$.
Recall that $M=\mrm{SO}_{d}(\R)$ denotes the centralizer of $A$ inside the standard maximal compact subgroup $K\cong \mrm{SO}_{d+1}(\R)$ of $G$.

For $x\in \R^{d}$, viewed as a row vector, we write $x^{t}$ for its transpose.
We let $\norm{x}^2 := x \cdot x$, and $x\cdot x$ denotes the sum of coordinate-wise products.
Hence, $N^+$ can be parametrized as follows:
\begin{align}\label{eq:parametrizing N+}
    N^+ = \set{n^+(x):= \begin{pmatrix}
    1 & x & \frac{\norm{x}^2}{2}\\
    \mathbf{0} &\mrm{I}_{d-1} & x^t\\ 
    0 & \mathbf{0} & 1
    \end{pmatrix}
    : x \in \R^{d} , }.
\end{align}
The group $N^-$ is parametrized by the transpose of the elements of $N^+$.

Note that the product map $ M\times A\times N^+ \times N^-\to G$ is a diffeomorphism near identity.
In particular, given $\t\in\R$, $m\in M$, and small enough\footnote{It suffices to have $\norm{x}$ and $\norm{y}$ at most $1/2$ for instance.} $x,y\in\R^{d}$, we can find $\phi_y(x)\in \R^{d}$ and $\t(x)\in R$ such that
\begin{align*}
    n^+(x) n^-(y)g_\t m \in N^-M g_{\t(x)} n^+(\phi(x)).
\end{align*}

The coordinates $(\t,x)$ parametrize every local weak stable leaf in our manifold.
In particular, the function $(\t(x),\phi(x))$ parametrize the image of such leaf under local strong stable holonomy.
To compute the function $\phi(x)$, let
\begin{align*}
    \l(x) = 1+x\cdot y + \frac{\norm{x}^2 \norm{y}^2}{4}.
\end{align*}
Then, by examining the first row of $n^+(x)n^-(y)$ in the above parametrization, we find that
\begin{align*}
    \t(x) = \t - \log\l(x),
    \qquad
    \phi(x) = \frac{1}{e^{\t}\l(x)} m^{-1}\cdot \left(x+\frac{\norm{x}^2}{2} y \right),
\end{align*}
where for $m\in M$ and $v\in \R^{d},$ we use the notation $m\cdot v$ to denote the standard action of $M$ on $\R^{d}$.
This concludes the proof Lemma~\ref{lem:phi_ell formula} follows by taking $x=v$, $y=w_{\rho,\ell}$, and $\t=t_{\rho,\ell}$ in the notation of the statement.

\bibliography{bibliography}
\bibliographystyle{plain}

\end{document}